\newtheorem{Theorem}{Theorem}
\newtheorem{Lemma}[Theorem]{Lemma}
\newtheorem{Proposition}[Theorem]{Proposition}
\newtheorem{Remark}[Theorem]{Remark}
\theoremstyle{definition}
\newtheorem{Definition}[Theorem]{Definition}
\newcommand{\Rb}{{\mathbb{R}}}
\newcommand{\C}{{\mathcal{C}}}
\newcommand{\LL}{{\mathcal{L}}}
\newcommand{\HH}{{\mathcal{H}}}
\newcommand{\M}{{\mathcal{M}}}
\def\rightharpoonupfill@{\arrowfill@\relbar\relbar\rightharpoonup}
\newcommand{\xrightharpoonup}[2][]{\ext@arrow
0359\rightharpoonupfill@{#1}{#2}} \makeatother
\def\weakstar{\buildrel\ast\over\rightharpoonup}
\def\e{{\varepsilon}}
\def\O{{\Omega}}
\def\weak{\rightharpoonup}
\def\M{{\it M}}
\def\M{{\cal M}}
\newcommand{{\rr}}{{\mathbb R}}
\begin{document}



\title{Lower semicontinuous envelopes in $W^{1,1}\times L^p$}


\author{ \textsc{ Ana Margarida Ribeiro} \thanks{Centro de  Matem\'{a}tica e Aplica\c{c}\~{o}es (CMA), FCT, UNL, Departamento de Matem\'{a}tica, FCT, UNL,
Quinta da Torre, 2829-516 Caparica, Portugal, E-mail: amfr@fct.unl.pt}, \textsc{ Elvira Zappale}\thanks{Universita'
degli Studi di Salerno, Via Ponte Don Melillo, 84084 Fisciano (SA) Italy.
E-mail:ezappale@unisa.it}}


\maketitle

\begin{abstract}
\noindent It is studied the lower semicontinuity of functionals of the type $\displaystyle{\int_\Omega f(x,u,v,\nabla u) dx }$ with respect to the $(W^{1,1}\times L^p)$-weak $\ast$ topology. Moreover in absence of lower semicontinuity, it is also provided an integral representation in $W^{1,1}\times L^p$ for the lower semicontinuous envelope.

\noindent\textbf{Keywords}: Lower semicontinuity, convexity-quasiconvexity.

\noindent\textbf{MSC2010 classification}:  49J45, 74F99.
\end{abstract}

\section{Introduction}
In this paper we consider energies depending on two vector fields with different behaviours: $u \in W^{1,1}(\Omega; \mathbb R^n)$, $v \in L^p(\Omega;\mathbb R^m)$, $\Omega$ being a bounded open set of $\mathbb R^N$.
Let $1<p\leq +\infty$, for every $(u,v)\in W^{1,1}(\Omega;\mathbb R^n)\times L^p(\Omega;\mathbb R^m)$ define the functional
\begin{equation}\label{J}
\displaystyle{J(u,v):= \int_\Omega f(x, u(x), v(x), \nabla u(x))dx}
\end{equation}
where $f:\Omega\times \mathbb{R}^n \times \mathbb{R}^m\times \mathbb{R}^{n\times N}\rightarrow [0,+\infty)$ is a continuous function with linear growth in the last variable and $p$-growth in the third variable (cf. $(H1_p)$ and $(H1_\infty)$ below).

The energies \eqref{J}, which generalize those considered by \cite{Fonseca-Kinderlehrer-Pedregal_1}, \cite{Fonseca-Kinderlehrer-Pedregal_2}  and \cite{CRZ2}, have been introduced to deal with equilibria for systems depending on elastic strain and chemical composition. In this context a multiphase alloy is represented by the set $\Omega$, the deformation gradient is given by $\nabla u$, and $v$ (when $m=1$) denotes the chemical composition of the system.
We also recall that our result may find applications also in the framework of Elasticity, when dealing with Cosserat's theory, see \cite{LDR}.
In \cite{Fonseca-Kinderlehrer-Pedregal_1}, the density $f \equiv f(v, \nabla u)$ is a convex-quasiconvex function, while in our model we also take into account heterogeneities and the deformation, without imposing any convexity restriction.

We are interested in studying the lower semicontinuity and relaxation of \eqref{J} with respect to the $L^1$-strong $\times L^p$-weak convergence. Clearly, bounded sequences $\{u_n\}\subset $ $ W^{1,1}(\O;\mathbb{R}^n)$ may converge in $L^1$, up to a subsequence,  to a $BV$ function. In this paper we restrict our analysis to limits $u$ which are in $W^{1,1}(\O;\mathbb{R}^n)$. Thus, our results can be considered as a step towards the study of relaxation in $BV(\Omega;\mathbb R^n) \times L^p(\Omega;\mathbb R^m)$ of functionals \eqref{J}.

We will consider separately the cases $1<p<\infty$ and $p=\infty$. To this end we introduce for $1<p<+\infty$ the functional
\begin{equation}\label{Jpbar}
\begin{array}{c}\overline{J}_p(u,v):=\inf\left\{\liminf J(u_n,v_n):\ u_n\in W^{1,1}(\O;\mathbb{R}^n),\ v_n\in L^p(\O;\mathbb{R}^m),\right.\vspace{0.2cm}\\ \hspace{5cm} \left.u_n \to u \hbox{ in } L^1, v_n \weak v  \hbox{ in } L^p\right\},\end{array}
\end{equation}
for any pair $(u,v)\in W^{1,1}(\O;\mathbb{R}^n)\times L^p(\O;\mathbb{R}^m),$
and for $p=\infty$ the functional
\begin{equation}\label{Jinftybar}
\begin{array}{c}\overline{J}_\infty(u,v):=\inf\left\{\liminf J(u_n,v_n):\ u_n\in W^{1,1}(\O;\mathbb{R}^n),\ v_n\in L^\infty(\O;\mathbb{R}^m),\right.\vspace{0.2cm}\\ \hspace{5cm} \left.u_n \to u \hbox{ in } L^1, v_n \weakstar v  \hbox{ in } L^\infty\right\},\end{array}
\end{equation}
for any pair $(u,v)\in W^{1,1}(\O;\mathbb{R}^n)\times L^\infty(\O;\mathbb{R}^m)$.

\noindent For any $p \in (1,+\infty]$ we will achieve the following integral representation (see Theorems \ref{maintheoreminftyrelax} and \ref{maintheoremprelax}):
$$
\displaystyle{\overline{J}_p(u,v)=\int_\Omega CQf(x,u(x), v(x),\nabla u(x))dx,}
$$
where $CQf$ represents the convex-quasiconvexification of $f$ defined in \eqref{CQ}.
\section{Notations and General Facts}

In this section we introduce the sets of assumptions we will make to obtain our results. We prove some properties related to convex-quasiconvex functions and we recall several facts that will be useful through the paper.

\subsection{Assumptions} Let $1<p< +\infty$, to obtain a characterization of the relaxed functional $\overline{J}_p$ in \eqref{Jpbar}, we will make several hypotheses on the continuous function  $f:\Omega\times \mathbb{R}^n \times \mathbb{R}^m\times \mathbb{R}^{n\times N}\rightarrow [0,+\infty)$. They are inspired by the set of assumptions in \cite{Fonseca-Muller-quasiconvex} for the case with no dependence on $v$.\medskip

\noindent $({\mathrm H}1_p)$ There exists  a constant $C$ such that
$$\frac{1}{C}(|v|^p + |\xi|)-C\le f(x,u,v,\xi)\le C(1+|v|^p +|\xi|),$$
for every  $(x,u,\xi)\in\Omega\times\mathbb{R}^n\times\mathbb R^{m}\times \mathbb{R}^{n\times N}$
\smallskip

\noindent $({\mathrm H}2_p)$ For every compact set $K$ of $\Omega \times \mathbb R^n$ there exists a continuous function $\omega_{K}:\mathbb{R}\rightarrow[0,+\infty)$ with $\omega_{K}(0)=0$ such that
\begin{equation}\nonumber
|f(x,u,v,\xi)-f(x',u',v,\xi)|\leq \omega_K(|x-x'|+|u-u'|)(1+|v|^p +|\xi|)
\end{equation}
 for every $(x,u,v,\xi)$ and $(x', u',v,\xi)$ in $K \times \mathbb R^m \times \mathbb R^{n \times N}$.\medskip

\noindent Moreover, given $x_0\in\Omega$, and $\varepsilon>0$ there exists  $\delta>0$ such that if $|x-x_0|\le\delta$ then
\begin{equation}\nonumber
\forall\ (u,v,\xi)\in\mathbb{R}^n\times\mathbb R^m \times\mathbb{R}^{n\times N}\quad f(x,u,v,\xi)-f(x_0,u,v,\xi)\ge -\varepsilon (1+|v|^p+|\xi|).
\end{equation}

\medskip
In order to characterize the functional ${\overline J_\infty}$ defined in \eqref{Jinftybar} we will replace assumptions $(\mathrm{H}1_p)$ and $(\mathrm{H}2_p)$ by the following ones.

\noindent$(\mathrm{H}1_\infty)$ Given $M>0$, there exist $C_M>0$ and a bounded continuous function $G_M: \Omega \times \mathbb R^n \to [0, +\infty)$ such that, if $|v|\le M$ then $$\forall\ (x,u,\xi)\in\Omega\times\mathbb{R}^n\times\mathbb{R}^{n\times N}\quad\frac{1}{C_M}G_M(x,u)|\xi|-C_M\le f(x,u,v,\xi)\le C_M G_M(x,u)(1+|\xi|).$$\smallskip

\noindent$(\mathrm{H}2_\infty)$ For every  $M>0$, and for every compact set $K$ of $\Omega \times \mathbb R^n$ there exists a continuous function $\omega_{M,K}:\mathbb{R}\rightarrow[0,+\infty)$ with $\omega_{M,K}(0)=0$ such that if $|v|\le M$ then
$$
|f(x,u,v,\xi)-f(x_0,u_0,v,\xi)|\leq \omega_{M,K}(|x-x_0|+|u-u_0|)(1+|\xi|)
$$
 for every $(x,u,\xi), (x_0, u_0,\xi) \in K \times \mathbb R^{n \times N}$.

\noindent Moreover, given $M>0$, $x_0\in\Omega$, and $\varepsilon>0$ there exists  $\delta>0$ such that if $|v|\le M$ and $|x-x_0|\le\delta$ then
$$\forall\ (u,\xi)\in\mathbb{R}^n\times\mathbb{R}^{n\times N}\quad f(x,u,v,\xi)-f(x_0,u,v,\xi)\ge -\varepsilon G_M(x,u) (1+|\xi|),$$
where the function $G_M$ is as in $(H1_\infty)$.

\subsection{Convex-Quasiconvex Functions}

We start recalling the notion of convex-qua\-si\-convex function,  presented  in \cite{Fonseca-Kinderlehrer-Pedregal_1} (see also \cite[Definition 4.1]{LDR}, \cite{Fonseca-Kinderlehrer-Pedregal_2} and \cite{Fonseca-Francfort-Leoni}).
This notion plays,  in the context of lower semicontinuity problems where the density depends on two fields $v, \nabla u$, the same role of the well known notion of quasiconvexity introduced by Morrey for the lower semicontinuity of functionals where the dependence is just on $\nabla u$.

\begin{Definition}
A Borel measurable function $h: \mathbb R^m \times \mathbb R^{n \times N} \to \mathbb R$ is said to be \emph{convex-quasiconvex} if there exists a bounded open set $D$ of $\mathbb R^N$ such that
\begin{equation}\label{cross-qcx}
h(v,\xi)\le\frac{1}{|D|}\int_D h(v+ \eta(x), \xi + \nabla \varphi(x))\,dx,
\end{equation}
for every $(v,\xi)\in \mathbb R^m \times \mathbb R^{n \times N}$, for every $\eta \in L^\infty(D;\mathbb R^m)$, with $\displaystyle{\int_D \eta(x)\,dx=0}$, and for every $\varphi \in W^{1, \infty}_0(D;\mathbb R^{n})$.
\end{Definition}

\begin{Remark}{\rm (i) It can be easily seen that, if $h$ is convex-quasiconvex, then, condition (\ref{cross-qcx}) is true for any bounded open set $D\subset\mathbb{R}^N$.

\noindent(ii) We recall that a convex-quasiconvex function is separately convex.

\noindent(iii) Through this paper we will work with functions $f$ defined in $\Omega \times \mathbb R^n \times \mathbb R^m \times \mathbb R^{n \times N}$ and when saying that $f$ is convex-quasiconvex we mean the previous definition with respect to the last two variables of $f$.}
\end{Remark}

The following result adapts to the context of $W^{1,1}\times L^p$, i.e. growth conditions expressed by $({\rm H}1_p)$, a well known result due to Marcellini (see Proposition 2.32 in \cite{Dacorogna} or Lemma 5.42 in \cite{AFP}).  Indeed the following proposition follows as a particular case of \cite[Proposition 2.11]{CRZ1} .

\begin{Proposition}\label{locallylipschitzLP}
Let $f:\Omega \times \mathbb R^n \times\mathbb{R}^m \times \mathbb{R}^{n\times
N}\rightarrow\mathbb{R}$ be a separately convex function in each entry of the variables $(v, \xi)$, verifying the
growth condition
\begin{equation}\nonumber
|f(x,u,v, \xi)|\leq c(1+|\xi|+|v|^p),\ \forall\ (x,u,\xi,v)\in\Omega \times \mathbb R^m \times \mathbb{R}^{n\times
N}\times\mathbb{R}^{m} 
\end{equation}
for some $p>1$.

\noindent Then, denoting by $p^{\prime}$, the conjugate exponent of $p$, there exists a constant $\gamma>0$ such that
$$
\left\vert f\left( x,u, v,\xi\right)  -f\left(x,u, v', \xi'\right)
\right\vert \leq\gamma  \left\vert
\xi-\xi^{\prime}\right\vert +\gamma\left(  1+\left\vert v\right\vert
^{p-1}+\left\vert v'\right\vert ^{p-1}+\left\vert \xi'\right\vert ^{1/p^{\prime}}\right)  |v-v^{\prime}|
$$
 for every $\xi,\xi^{\prime}%
\in\mathbb{R}^{n\times N}$, $v,v^{\prime}\in\mathbb{R}^{m}$ and $(x,u)\in \Omega \times \mathbb R^n$.
\end{Proposition}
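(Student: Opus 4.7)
The plan is to reduce the two-variable bound to two one-variable slice estimates and apply, in each slice, the classical local Lipschitz bound for separately convex functions quoted above: a separately convex $h:\mathbb{R}^k\to\mathbb{R}$ with $\sup_{B_{2R}(x_0)}|h|\le M$ is Lipschitz on $B_R(x_0)$ with constant bounded by a purely dimensional multiple of $M/R$. Fix $(x,u)\in\Omega\times\mathbb{R}^n$ and set $g(v,\xi):=f(x,u,v,\xi)$; then $g$ inherits separate convexity in each entry of $(v,\xi)$ and satisfies $|g(v,\xi)|\le c(1+|\xi|+|v|^p)$. Adding and subtracting $g(v,\xi')$ gives
$$
|g(v,\xi)-g(v',\xi')|\le |g(v,\xi)-g(v,\xi')|+|g(v,\xi')-g(v',\xi')|,
$$
so it is enough to control the $\xi$-slice $\xi\mapsto g(v,\xi)$ and the $v$-slice $v\mapsto g(v,\xi')$ separately.

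For the $\xi$-slice, the linear growth in $\xi$ yields $\sup_{B_{2R}(0)}|g(v,\cdot)|\le c(1+2R+|v|^p)$, so the local Lipschitz bound gives a Lipschitz constant on $B_R(0)$ not larger than $C\,c(1+2R+|v|^p)/R$. Since $\xi,\xi'\in B_R(0)$ for every sufficiently large $R$, letting $R\to+\infty$ absorbs the $|v|^p$ contribution and produces
$$
|g(v,\xi)-g(v,\xi')|\le\gamma|\xi-\xi'|,
$$
with $\gamma$ depending only on $c$ and the dimension, uniformly in $v$.

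For the $v$-slice one has $\sup_{B_{2R}(0)}|g(\cdot,\xi')|\le c(1+|\xi'|+(2R)^p)$, hence
$$
|g(v,\xi')-g(v',\xi')|\le C\,\frac{1+|\xi'|+(2R)^p}{R}\,|v-v'|
$$
for every $R\ge\max(|v|,|v'|)$. The right-hand side has the form $a/R+bR^{p-1}$ with $a\sim 1+|\xi'|$ and $b\sim 1$, and is minimized at $R_{*}\sim(1+|\xi'|)^{1/p}$ with optimal value $\sim (1+|\xi'|)^{1/p'}$ via the conjugate-exponent identity. Choosing $R:=\max\bigl\{|v|,|v'|,(1+|\xi'|)^{1/p}\bigr\}$ and distinguishing the three cases according to which term realizes the maximum, one checks that in every case the prefactor is controlled by $C\bigl(1+|v|^{p-1}+|v'|^{p-1}+|\xi'|^{1/p'}\bigr)$; in particular, when $|v|$ or $|v'|$ dominates, $(1+|\xi'|)/R$ is absorbed into $|v|^{p-1}+|v'|^{p-1}$.

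The delicate point, and the main obstacle, is precisely this optimization in the $v$-slice: one must select the scale $R\sim(1+|\xi'|)^{1/p}$ in order to recover the sharp exponent $1/p'$ on $|\xi'|$ rather than the naive exponent $1$, while still respecting the constraint $R\ge\max(|v|,|v'|)$ imposed by the local Lipschitz estimate. Once both slice bounds are available, summing them yields the claimed inequality with a constant $\gamma$ depending only on $c$, $p$, and the dimensions $n$, $m$, $N$.
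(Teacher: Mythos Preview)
Your argument is correct. The paper does not supply its own proof of this proposition: it simply records that the statement is a particular case of \cite[Proposition 2.11]{CRZ1} and points to the classical Marcellini-type local Lipschitz estimate for separately convex functions (Proposition~2.32 in \cite{Dacorogna}, Lemma~5.42 in \cite{AFP}) as the underlying ingredient. Your proof is exactly the standard adaptation of that estimate to the mixed $(|\xi|,|v|^p)$ growth setting---splitting into $\xi$- and $v$-slices, sending $R\to\infty$ in the $\xi$-slice to exploit the linear growth, and optimizing the scale $R\sim(1+|\xi'|)^{1/p}$ in the $v$-slice to extract the exponent $1/p'$---so there is nothing to contrast.
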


A similar result holds in the case $W^{1,1}\times L^\infty$ (i.e. growth conditions expressed by $({\rm H1}_\infty)$). 

\begin{Proposition}\label{locallyLipschitz}
Let $f:\Omega \times \mathbb R^n \times \mathbb R^m \times \mathbb R^{n \times N}\to \mathbb R$ be a separately convex function in each entry of the variables $(v,\xi)$, verifying assumption $({\rm H1}_\infty)$. Then, given $M >0$ there exists a constant $\beta(M,n, m,N)$ such that
\begin{equation}\label{MarcelliniLipschitz}
|f(x,u,v, \xi)-f(x,u,v',\xi')|\leq \beta  (1+ |\xi|+ |\xi'|)|v-v'|+ \beta |\xi-\xi'|,
\end{equation}
for  every $v, v' \in \mathbb R^m$, such that $|v|\leq M$ and $|v'|\leq M$, for every $\xi ,\xi' \in \mathbb R^{n \times N}$ and for every $(x,u) \in \Omega \times \mathbb R^n$.
\end{Proposition}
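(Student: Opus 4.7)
My plan is to mimic the proof of the Marcellini-type Proposition \ref{locallylipschitzLP} but adapted to the separate-convexity hypothesis and to the $L^\infty$-growth $(\mathrm{H}1_\infty)$. Since the function $G_M$ in $(\mathrm{H}1_\infty)$ is bounded on $\Omega\times\mathbb R^n$, that assumption already delivers a growth bound independent of $(x,u)$: setting $\tilde C_M:=C_M\|G_M\|_\infty$,
\[
-C_M \;\le\; f(x,u,v,\xi)\;\le\; \tilde C_M(1+|\xi|) \quad \text{for every } (x,u,\xi) \text{ and every } |v|\le M.
\]
All constants that appear depend only on $M,n,m,N$ via $C_M$ and $\tilde C_M$. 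I would then split the increment via the triangle inequality,
\[
|f(x,u,v,\xi)-f(x,u,v',\xi')|\le |f(x,u,v,\xi)-f(x,u,v,\xi')|+|f(x,u,v,\xi')-f(x,u,v',\xi')|,
\]
and treat the two summands separately, reducing each to one real variable at a time.

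For the $\xi$-summand, fix $(x,u,v)$ with $|v|\le M$. For each scalar coordinate $\xi_{ij}$, the map $t\mapsto f(x,u,v,\xi+t\,e_{ij})$ is convex on all of $\mathbb R$ and satisfies $|f|\le \tilde C_M(1+|\xi+t e_{ij}|)$. A standard scalar fact---the one-sided difference quotient of a convex function is monotone, and its limits at $\pm\infty$ are controlled by the slopes of the linear upper and lower bounds---then yields a global Lipschitz constant of order $\tilde C_M$ in that coordinate. Telescoping coordinate by coordinate from $\xi$ to $\xi'$ gives $|f(x,u,v,\xi)-f(x,u,v,\xi')|\le\beta_1|\xi-\xi'|$. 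For the $v$-summand, fix $(x,u,\xi')$ and take $v,v'\in B_M\subset\mathbb R^m$. Apply $(\mathrm{H}1_\infty)$ with $2M$ in place of $M$ to obtain $-C_{2M}\le f \le \tilde C_{2M}(1+|\xi'|)$ for every $|v|\le 2M$. The classical Lipschitz-on-a-sub-interval estimate for a scalar convex function (oscillation divided by the distance from the sub-interval to the boundary of the controlled domain) applied coordinate by coordinate along a telescoping path from $v$ to $v'$ that stays inside $B_{2M}$ then yields $|f(x,u,v,\xi')-f(x,u,v',\xi')|\le \beta_2(1+|\xi'|)|v-v'|$.

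Combining the two estimates and using $1+|\xi'|\le 1+|\xi|+|\xi'|$ produces \eqref{MarcelliniLipschitz} with $\beta:=\max(\beta_1,\beta_2)$. The main obstacle, relative to the jointly convex case of Proposition \ref{locallylipschitzLP}, is that only \emph{separate} convexity is available: no multidimensional sub-gradient bound can be invoked, so every estimate must be reduced to a scalar convex function, and in the $v$-direction the telescoping path must be carefully chosen to remain inside the ball where the $(\mathrm{H}1_\infty)$ growth bound is effective. A secondary point is the need to track the $\xi$-dependence in the upper bound; it is precisely this linear $\xi$-factor in $(\mathrm{H}1_\infty)$ that transfers to the $(1+|\xi|+|\xi'|)$ coefficient in the final estimate.
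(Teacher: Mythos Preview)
Your argument is correct and is precisely the standard Marcellini-type proof (coordinate-by-coordinate telescoping using separate convexity together with the growth bound). The paper does not actually supply a proof of this proposition: it simply states it as the $L^\infty$-analogue of Proposition~\ref{locallylipschitzLP}, which in turn is attributed to Marcellini (cf.\ \cite[Proposition~2.32]{Dacorogna}, \cite[Lemma~5.42]{AFP}) and to \cite[Proposition~2.11]{CRZ1}; your outline is exactly the argument implicit in those references.
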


We introduce the notion of convex-quasiconvexification with respect to the last variables for a function $f:\Omega \times \mathbb R^n \times \mathbb R^m \times \mathbb{R}^{n \times N}\to [0, +\infty)$. This notion is crucial in order to deal with the subsequent relaxation processes.

If $h:\mathbb{R}^{m}\times \mathbb R^{n \times N}\rightarrow\mathbb{R}$ is any
given Borel measurable function bounded from below, it can be defined the
convex-quasiconvex envelope of $h$, that is the largest convex-quasiconvex
function below $h$:
\begin{equation}\label{CQ}
CQh(v,\xi):=\sup\{g(v,\xi):\ g\leq h,\ g\hbox{ convex-quasiconvex}\}.
\end{equation}
Moreover, by Theorem 4.16 in \cite{LDR}
\begin{equation}
\begin{array}{ll}
CQh(v,\xi)=\inf & \left\{  \displaystyle{\frac{1}{|D|}\int_{D}h(v+ \eta(x),\xi+\nabla\varphi(x))\,dx}:\right. \\
& \left. \displaystyle{\eta\in L^{\infty
}(D;\mathbb{R}^{m}),\int_{D}\eta(x)dx=0, \varphi\in W_{0}^{1,\infty}(D;\mathbb{R}^n),}\right\}  . \label{qcx-cx-rel}
\end{array}
\end{equation}

Consequently given a Carath\'eodory function $f:\Omega \times \mathbb R^n \times \mathbb R^m \times \mathbb R^{n \times N} \to \mathbb R$, by $CQf(x,u,v,\xi)$ we denote the convex-quasiconvexification of $f(x,u,v,\xi)$ with respect to the last two variables.

As for convex-quasiconvexity, condition (\ref{qcx-cx-rel}) can be stated for any bounded open set $D\subset\mathbb{R}^{N}$ and it can be also showed that if $f$ satisfies a growth condition of the type $({\rm H1}_p) $ then in (\ref{cross-qcx}) and
(\ref{qcx-cx-rel}) the spaces $L^{\infty}$ and $W_{0}^{1,\infty}$ can be
replaced by $L^p$ and $W_{0}^{1,1}$, respectively.

The following results will be exploited in the sequel. We omit the proofs since they are very similar to \cite[Proposition 2.2]{RZ}, in turn inspired by \cite{Dacorogna}.

\begin{Proposition}\label{measCQf_p}
Let $\Omega \subset \mathbb R^N$ be a bounded open set and
$
f: \Omega \times \mathbb R^n \times \mathbb R^m \times \mathbb R^{n \times N}$ $ \to \mathbb [0, +\infty)
$
be a continuous function satisfying $({\rm H1}_p)$ and $({\rm H2}_p)$. Let $CQf$ be the convex-quasiconvexification of $f$  in \eqref{qcx-cx-rel}. Then $CQf$ satisfies $({\rm H1}_p)$, $({\rm H2}_p)$ and it is a continuous function.




\end{Proposition}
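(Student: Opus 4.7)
The strategy is to derive all three statements from the infimum characterization (\ref{qcx-cx-rel}) on a fixed bounded open set $D\subset\mathbb R^N$, with competitors $(\eta,\varphi) \in L^p(D;\mathbb R^m) \times W_0^{1,1}(D;\mathbb R^n)$; this replacement of $L^\infty$ and $W_0^{1,\infty}$ by $L^p$ and $W_0^{1,1}$ is permitted by the remark following (\ref{qcx-cx-rel}) under growth $({\rm H}1_p)$.

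I would first verify $({\rm H}1_p)$ for $CQf$. The upper bound $CQf(x,u,v,\xi) \leq C(1+|v|^p+|\xi|)$ is immediate from $CQf \leq f$. For the lower bound, the map $(v,\xi) \mapsto \frac{1}{C}(|v|^p+|\xi|) - C$ is jointly convex, hence convex-quasiconvex (apply Jensen to the mean-zero $\eta$ and to $\nabla\varphi$, whose integral over $D$ vanishes since $\varphi\in W_0^{1,\infty}$), and it lies pointwise below $f(x,u,\cdot,\cdot)$; since $CQf(x,u,\cdot,\cdot)$ is the largest convex-quasiconvex minorant of $f(x,u,\cdot,\cdot)$, this minorant lies below $CQf(x,u,\cdot,\cdot)$.

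Next, for the modulus estimate in $({\rm H}2_p)$, fix $(x,u),(x',u') \in K$ and $(v,\xi)$. For $\tau > 0$, pick $(\eta,\varphi)$ that is $\tau$-optimal for $CQf(x,u,v,\xi)$ in (\ref{qcx-cx-rel}), and use the same pair as a (non-optimal) competitor for $CQf(x',u',v,\xi)$; applying $({\rm H}2_p)$ for $f$ pointwise and integrating yields
$$CQf(x',u',v,\xi) - CQf(x,u,v,\xi) \leq \tau + \omega_K(|x-x'|+|u-u'|)\,\frac{1}{|D|}\int_D (1+|v+\eta|^p+|\xi+\nabla\varphi|)\,dy.$$
To dominate the last integral, combine the lower bound of $({\rm H}1_p)$ for $f$ with the upper bound of $CQf$ already proved: since $\frac{1}{|D|}\int_D f(x,u,v+\eta,\xi+\nabla\varphi)\,dy \leq CQf(x,u,v,\xi)+\tau \leq C(1+|v|^p+|\xi|)+\tau$, a direct rearrangement gives
$$\frac{1}{|D|}\int_D (1+|v+\eta|^p+|\xi+\nabla\varphi|)\,dy \leq C'(1+|v|^p+|\xi|)$$
for a constant $C'$ independent of the data and of $\tau \leq 1$. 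Swapping the roles of $(x,u)$ and $(x',u')$ and letting $\tau \to 0$ delivers the modulus estimate for $CQf$ with modulus $\widetilde{\omega}_K := C'\omega_K$. The one-sided estimate at $x_0$ is obtained analogously, choosing $(\eta,\varphi)$ $\tau$-optimal for $CQf(x,u,v,\xi)$ and invoking the one-sided form $f(x_0,u,v',\xi') \leq f(x,u,v',\xi') + \varepsilon(1+|v'|^p+|\xi'|)$ with $v'=v+\eta$, $\xi'=\xi+\nabla\varphi$, and then bounding the resulting integral by the same rearrangement.

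Finally, continuity of $CQf$ follows by combining these estimates. Remark (ii) ensures $CQf$ is separately convex in $(v,\xi)$, so the upper $({\rm H}1_p)$ bound together with Proposition \ref{locallylipschitzLP} produces local Lipschitz continuity of $CQf$ in $(v,\xi)$ uniform in $(x,u)$ on compact sets; combined with the $(x,u)$-continuity from the modulus estimate just proved, this gives joint continuity. The hard part of the argument is precisely the a priori control of $\|\eta\|_{L^p(D)}^p$ and $\|\nabla\varphi\|_{L^1(D)}$ on near-optimal competitors; without it the modulus for $f$ cannot be transferred to $CQf$, and the two-sided growth $({\rm H}1_p)$ is exactly what supplies it once the upper growth of $CQf$ has been secured.
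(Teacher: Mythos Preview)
Your argument is correct and follows the standard route for this type of result; in fact the paper omits the proof entirely, referring to \cite[Proposition~2.2]{RZ} and \cite{Dacorogna}, and what you have written is precisely the kind of argument found there. The three ingredients you identify---the convex-quasiconvexity of the coercive minorant, the transfer of the modulus via near-optimal competitors combined with the a~priori bound on $\|\eta\|_{L^p}^p+\|\nabla\varphi\|_{L^1}$ coming from coercivity, and the use of Proposition~\ref{locallylipschitzLP} for joint continuity---are exactly the expected steps.
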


Analogously it holds

\begin{Proposition}\label{measCQf_infty}
Let $\Omega \subset \mathbb R^N$ be a bounded open set, let $\alpha:[0, +\infty) \to [0,+\infty)$ be a convex and increasing function, such that $\alpha(0)=0$ and let
$f: \Omega \times \mathbb R^n \times \mathbb R^m \times \mathbb R^{n \times N} \to \mathbb [0, +\infty)$
be a continuous function satisfying the following conditions.

\noindent For a.e. $(x,u)\in \Omega \times \mathbb R^n$ and for every $(v,\xi)\in \mathbb R^m \times \mathbb R^{n \times N}$ it results
\begin{equation}\label{H1_1inftyCQf}
\frac{1}{C}(\alpha(|v|)+ |\xi|)-C \leq f(x,u,v,\xi) \leq C(1+ \alpha(|v|)+ |\xi|).
\end{equation}

\noindent For every compact set $K \subset \Omega \times \mathbb R^n$ there exists a continuous function $\omega'_{K}:\mathbb R \to [0, +\infty)$ such that
$\omega'_{K}(0)=0$ and
\begin{equation}\label{H2_1inftyCQf}
|f(x,u,v,\xi)- f(x',u',v,\xi)| \leq \omega'_{K}(|x-x'|+|u-u'|)(1 + \alpha(|v|)+ |\xi|),\
\end{equation}
$ \forall (x,u),(x',u')\in K,\ \forall\ (v,\xi) \in \mathbb R^m \times \mathbb R^{n \times N}.$

\noindent For every $x_0 \in \Omega$ and $\e >0$, there exists $\delta>0$ such that
\begin{equation}\label{H2_2inftyCQf}
 |x-x_0|\leq \delta\ \Rightarrow\  f(x,u,v,\xi) - f(x_0,u,v,\xi) \geq -\e(1 +\alpha(|v|) +|\xi|),
\end{equation}
$\forall\ (u,\xi)\in \mathbb R^n \times \mathbb R^m \times \mathbb R^{n \times N}.$

Let $CQf$ be the convex- quasiconvexification of $f$  (see \eqref{qcx-cx-rel}). Then $CQf$ satisfies analogous conditions to \eqref{H1_1inftyCQf}, \eqref{H2_1inftyCQf} and \eqref{H2_2inftyCQf}. Moreover CQf is a continuous function.
\end{Proposition}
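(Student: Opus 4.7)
The plan is to verify the three conditions \eqref{H1_1inftyCQf}, \eqref{H2_1inftyCQf}, \eqref{H2_2inftyCQf} for $CQf$ one by one by exploiting the infimum representation \eqref{qcx-cx-rel}, and then to deduce joint continuity from separate convexity together with the growth bound, following the blueprint of \cite[Proposition 2.2]{RZ}.

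For the growth condition, I would observe that the two bounding functions $(v,\xi)\mapsto C(1+\alpha(|v|)+|\xi|)$ and $(v,\xi)\mapsto \frac{1}{C}(\alpha(|v|)+|\xi|)-C$ are themselves convex-quasiconvex: since $\alpha$ is convex and increasing, $\alpha(|\cdot|)$ is convex in $v$, while $|\cdot|$ is convex (hence quasiconvex) in $\xi$, so Jensen's inequality applied in \eqref{cross-qcx} gives the required inequality. Hence the first dominates $f$ pointwise and so dominates $CQf$ by definition of the envelope, while the second lies below $f$ and hence below $CQf$; this yields \eqref{H1_1inftyCQf} for $CQf$ with the same constant $C$.

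For \eqref{H2_1inftyCQf}, fix a compact $K$, points $(x,u),(x',u')\in K$, and $(v,\xi)$. Given $\varepsilon>0$, by \eqref{qcx-cx-rel} I choose $\eta\in L^\infty(D;\mathbb R^m)$ with zero mean and $\varphi\in W^{1,\infty}_0(D;\mathbb R^n)$ such that $\frac{1}{|D|}\int_D f(x,u,v+\eta,\xi+\nabla\varphi)\,dx\leq CQf(x,u,v,\xi)+\varepsilon$. Testing the same pair in the representation at $(x',u')$ and applying \eqref{H2_1inftyCQf} for $f$ pointwise gives
\[
CQf(x',u',v,\xi)-CQf(x,u,v,\xi) \leq \varepsilon + \omega'_K(|x-x'|+|u-u'|)\cdot\frac{1}{|D|}\int_D\bigl(1+\alpha(|v+\eta|)+|\xi+\nabla\varphi|\bigr)\,dx.
\]
The integral on the right is controlled by combining the lower bound in \eqref{H1_1inftyCQf} for $f$ with the upper bound just obtained for $CQf$:
\[
\frac{1}{|D|}\int_D\bigl(\alpha(|v+\eta|)+|\xi+\nabla\varphi|\bigr)\,dx \leq C\bigl(CQf(x,u,v,\xi)+\varepsilon\bigr)+C^2 \leq C'(1+\alpha(|v|)+|\xi|),
\]
after absorbing $C\varepsilon$. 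Letting $\varepsilon\to 0$ and interchanging $(x,u)$ with $(x',u')$ produces the claimed modulus, with a redefined $\omega'_K$. Condition \eqref{H2_2inftyCQf} is obtained by an analogous argument, but choosing $\eta,\varphi$ which approximate $CQf(x,u,v,\xi)$ and using the pointwise one-sided bound $f(x,u,v+\eta,\xi+\nabla\varphi)\geq f(x_0,u,v+\eta,\xi+\nabla\varphi)-\varepsilon(1+\alpha(|v+\eta|)+|\xi+\nabla\varphi|)$, valid for $|x-x_0|\leq\delta$; the first term integrates to at least $CQf(x_0,u,v,\xi)$, while the perturbation is absorbed into $\varepsilon\cdot C''(1+\alpha(|v|)+|\xi|)$ by the same growth manipulation. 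Replacing the original $\varepsilon$ by $\varepsilon/C''$ in the application of \eqref{H2_2inftyCQf} for $f$ (and shrinking $\delta$ accordingly) yields the required inequality.

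Finally, joint continuity of $CQf$ follows because, being convex-quasiconvex, it is separately convex in $v$ and in $\xi$, and by \eqref{H1_1inftyCQf} an adaptation of the Marcellini-type argument underlying Propositions \ref{locallylipschitzLP} and \ref{locallyLipschitz} yields local Lipschitz continuity in $(v,\xi)$ on bounded sets, uniformly in $(x,u)$ on compacta; combined with the modulus in $(x,u)$ just established, this gives continuity. The main obstacle is the bookkeeping in the growth estimate: one must simultaneously control $\frac{1}{|D|}\int_D f(x,u,v+\eta,\xi+\nabla\varphi)\,dx$ from above by $CQf+\varepsilon$, so that the perturbation stays linear in $CQf$, and then re-bound $CQf$ itself by $(1+\alpha(|v|)+|\xi|)$ to close the estimate; keeping the constants consistent through all three conditions is the routine but delicate part.
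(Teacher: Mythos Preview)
Your proposal is correct and follows the standard route the paper defers to (it omits the proof, citing \cite[Proposition 2.2]{RZ} and \cite{Dacorogna}): verify the growth bounds by noting the barrier functions $(v,\xi)\mapsto \alpha(|v|)+|\xi|$ are convex--quasiconvex via Jensen, transfer the moduli \eqref{H2_1inftyCQf}--\eqref{H2_2inftyCQf} by testing near-optimal $(\eta,\varphi)$ in \eqref{qcx-cx-rel} and re-absorbing the averaged weight through the coercivity, then obtain joint continuity from separate convexity plus growth. The only cosmetic point is that the constants in the transferred conditions are inflated (so ``analogous conditions'' rather than the \emph{same} constants), which you already note.
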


\begin{Remark}
We observe that, if from one hand \eqref{H1_1inftyCQf}, \eqref{H2_1inftyCQf}, \eqref{H2_2inftyCQf} generalize $(H1_p)$ and $(H2_p)$, from the other hand they can be regarded also as a stronger version of $(H1_\infty)$ and $(H2_\infty)$.
\end{Remark}

In order to provide an integral representation for $\overline{J}_p$ in \eqref{Jpbar} and $\overline{J}_\infty$ in \eqref{Jinftybar} on $W^{1,1}\times L^p$ and $W^{1,1}\times L^\infty$ respectively, we prove some preliminary results.

\noindent For every $p \in (1,+\infty]$ we introduce the following functionals $J_{CQf}:L^1(\Omega; \mathbb R^n)\times L^p(\Omega;\mathbb R^m)\rightarrow\mathbb{R}\cup\{+\infty\}$ defined as
$$
J_{CQf}(u,v):=\left\{\begin{array}{ll}
\displaystyle{\int_\O CQf(x,u(x),v(x),\nabla u(x))\,dx }&\hbox{ if }(u,v) \in W^{1,1}\times L^p,\vspace{0.2cm} \\
+ \infty &\hbox{otherwise},
\end{array}
\right.
$$ and its relaxed one $$
\begin{array}{ll}\displaystyle{\overline{J_{CQf}}(u,v):=\inf\left\{\liminf_n J_{CQf}(u_n,v_n): \right.}&\displaystyle{(u_n, v_n)\in W^{1,1}(\Omega;\mathbb{R}^n)\times L^p(\Omega;\mathbb R^m),}\\
\\
&\displaystyle{\left. \ u_n \to u \hbox{ in }L^1, v_n \weakstar v \hbox{ in }L^p\right\}}.
\end{array}$$


\begin{Lemma}\label{FMnonquasiconvex}
Let $f:\Omega\times\mathbb{R}^n\times \mathbb R^m \times \mathbb{R}^{n\times N}\rightarrow [0,+ \infty)$ be a continuous function. Let $p \in (1,+\infty]$ and consider the functionals $J$ and $J_{CQf}$ and their corresponding relaxed functionals $\overline{J}_p$  and $\overline{J_{CQf}}$. If $f$ satisfies conditions $(H1_p)-(H2_p)$ (if $p \in (1,+\infty))$, and both $f$ and $CQf$ satisfy $(H1_\infty)-(H2_\infty)$ (if $p=+\infty$), then
$$\displaystyle{\overline{J}_p(u,v)=\overline{J_{CQf}}(u,v)}$$ for every $(u,v) \in BV(\Omega,\mathbb{R}^n)\times L^p(\Omega;\mathbb R^m)$.
\end{Lemma}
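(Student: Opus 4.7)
The strategy is to prove the two inequalities separately. The easy direction $\overline{J_{CQf}}\le \overline{J}_p$ is immediate from $CQf\le f$, which yields $J_{CQf}\le J$ on $W^{1,1}\times L^p$; relaxing both sides in the same topology preserves the inequality. For the converse $\overline{J}_p\le \overline{J_{CQf}}$, it suffices to establish the pointwise bound
\[
\overline{J}_p(u,v)\le J_{CQf}(u,v)\qquad\text{for every }(u,v)\in W^{1,1}(\O;\mathbb R^n)\times L^p(\O;\mathbb R^m),
\]
because $\overline{J}_p$ is by construction sequentially lower semicontinuous in $L^1$-strong $\times\, L^p$-weak-$*$, so the bound transfers to $\overline{J}_p\le \overline{J_{CQf}}$ on $W^{1,1}\times L^p$. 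The extension from $W^{1,1}\times L^p$ to $BV\times L^p$ is then handled by a standard diagonalization: given $(u_k,v_k)\in W^{1,1}\times L^p$ with $u_k\to u$ in $L^1$, $v_k\weakstar v$ and $J_{CQf}(u_k,v_k)\to \overline{J_{CQf}}(u,v)$, apply the $W^{1,1}$-statement to each $(u_k,v_k)$ and extract a diagonal subsequence.

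The construction of the recovery sequence is the main step. Fix $\varepsilon>0$ and, using Lusin's theorem together with the continuity estimates $(H2_p)$/$(H2_\infty)$ and the regularity of $CQf$ from Propositions \ref{measCQf_p}--\ref{measCQf_infty}, partition $\O$ (up to a set of small measure) into finitely many cubes $Q_i^\varepsilon$ on which $u$, $v$ and $\nabla u$ are nearly constant, equal to $u_i$, $v_i$, $\xi_i$ at a selected point $x_i\in Q_i^\varepsilon$. On a reference cube $D$, the representation formula \eqref{qcx-cx-rel} supplies $\eta_i$ with zero mean and $\varphi_i\in W_0^{1,\infty}(D;\mathbb R^n)$ such that
\[
\frac{1}{|D|}\int_D f(x_i,u_i,v_i+\eta_i(y),\xi_i+\nabla\varphi_i(y))\,dy\le CQf(x_i,u_i,v_i,\xi_i)+\varepsilon.
\]
Rescaling these periodically inside each $Q_i^\varepsilon$ with period comparable to $1/n$ yields $\eta_{i,n}$ with bounded $L^\infty$ norm and $\varphi_{i,n}$ whose $L^\infty$ norm vanishes as $n\to\infty$ while its gradient stays bounded. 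Setting $u_n:=u+\sum_i\varphi_{i,n}$ and $v_n:=v+\sum_i\eta_{i,n}$, one has $u_n\to u$ uniformly on $\O$ and $v_n\weak v$ in $L^p$ (or $\weakstar$ if $p=\infty$) by the standard weak convergence of zero-mean periodic functions. Passing to the limit in $J(u_n,v_n)$ cube by cube---using Propositions \ref{locallylipschitzLP}--\ref{locallyLipschitz} to separate the oscillating perturbation from the replacement of $(x,u)$ by $(x_i,u_i)$---gives $\limsup_n J(u_n,v_n)\le \int_\O CQf(x,u,v,\nabla u)\,dx+C\varepsilon$, and a final $\varepsilon\to 0$ diagonalization yields the desired bound.

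The main obstacle is the energy passage in the recovery step: one must simultaneously control \emph{(i)} the sub-optimality of the test pair $(\eta_i,\varphi_i)$ in the infimum formula for $CQf$, \emph{(ii)} the error from replacing the actual data $(x,u(x),v(x),\nabla u(x))$ by the constants $(x_i,u_i,v_i,\xi_i)$, and \emph{(iii)} the cross interaction between the oscillating $\eta_{i,n}$ and the oscillating $\nabla\varphi_{i,n}$, which are only $L^p$- and $L^1$-bounded respectively. Error \emph{(i)} is absorbed into $C\varepsilon$; error \emph{(ii)} is precisely what the continuity hypotheses $(H2_p)$/$(H2_\infty)$, together with the regularity of $CQf$, are designed to handle; error \emph{(iii)} is controlled by the local Lipschitz estimate of Proposition \ref{locallylipschitzLP} (resp.\ \ref{locallyLipschitz}), whose growth exponents $p-1$ in $v$ and $1/p'$ in $\xi$ match exactly the integrability available for the recovery sequence.
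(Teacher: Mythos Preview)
Your high-level architecture coincides with the paper's: the inequality $\overline{J_{CQf}}\le\overline{J}_p$ comes for free from $CQf\le f$, and for the converse one takes a near-optimal sequence $(u_n,v_n)\in W^{1,1}\times L^p$ for $\overline{J_{CQf}}(u,v)$, produces for each $n$ a recovery sequence $(u_{n,k},v_{n,k})$ realising $\int_\Omega CQf(x,u_n,v_n,\nabla u_n)\,dx$ as a limit of $J$-values, and then diagonalizes (using metrizability of the weak/weak-$*$ topology on bounded sets). The only structural difference is that the paper does \emph{not} build the inner recovery sequence by hand: it simply invokes the relaxation results of \cite{CRZ1,CRZ2}, which already provide such a sequence for every $(u_n,v_n)\in W^{1,1}\times L^p$ under the stated hypotheses. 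Your proposal is thus a self-contained expansion of the same proof, with the CRZ step sketched explicitly; what the paper buys by citing is a two-paragraph argument instead of a full construction.

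That expanded sketch, however, has a genuine gap. Propositions~\ref{locallylipschitzLP} and~\ref{locallyLipschitz} require separate convexity in $(v,\xi)$; this holds for $CQf$ but is \emph{not} assumed for $f$. Yet your limit passage in $J(u_n,v_n)=\int f(\cdots)$ requires comparing, on each cube,
\[
f\bigl(x,\,u+\varphi_{i,n},\,v+\eta_{i,n},\,\nabla u+\nabla\varphi_{i,n}\bigr)
\quad\text{with}\quad
f\bigl(x_i,\,u_i,\,v_i+\eta_{i,n},\,\xi_i+\nabla\varphi_{i,n}\bigr).
\]
The $(x,u)$-part is indeed handled by $(H2_p)$/$(H2_\infty)$, but the replacement of $(v(x),\nabla u(x))$ by $(v_i,\xi_i)$ \emph{inside an $f$-integral} cannot be justified via those Lipschitz propositions. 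Two standard repairs are available. Either (a) first replace $(u,v)$ by a piecewise-affine/piecewise-constant pair $(\tilde u,\tilde v)$ and build the periodic oscillation on top of \emph{that} (so no $(v,\xi)$-replacement inside $\int f$ is ever needed), using the Lipschitz estimate only on $CQf$---where it is legitimate---to show $J_{CQf}(\tilde u,\tilde v)\to J_{CQf}(u,v)$; or (b) keep your construction $u_n=u+\sum\varphi_{i,n}$, $v_n=v+\sum\eta_{i,n}$, but control the $(v,\xi)$-replacement by uniform continuity of $f$ on compacta (legitimate because $\eta_i,\nabla\varphi_i\in L^\infty$) together with Lusin/Scorza--Dragoni and the growth bound on the residual small-measure set. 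Either way, the role you assign to Propositions~\ref{locallylipschitzLP}--\ref{locallyLipschitz} in handling error~(iii) for $f$ is misplaced.
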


\begin{Remark}\label{growthremark}
We emphasize that in the above lemma,  by virtue of Proposition \ref{measCQf_p},  if $p \in (1,+\infty)$ it is enough to assume growth and continuity hypotheses just on $f$ (and not on $CQf$). If $p=+\infty$, by virtue of Proposition \ref{measCQf_infty}, we can also only make assumptions of $f$, replacing conditions $(H1_\infty)$  and $(H2_\infty)$ by \eqref{H1_1inftyCQf} $- $\eqref{H2_2inftyCQf}.
\end{Remark}

\begin{proof}[Proof]
The argument is close to the proof of \cite[Lemma 3.1]{RZ}. First we observe that, since $CQf \leq f$, it results $\overline{J_{CQf}}\leq\overline{J}_p$. Next we prove the opposite inequality in the nontrivial case that $\overline{J_{CQf}}(u,v)<+\infty$. For fixed $\delta>0$, we can consider $(u_n,v_n)\in W^{1,1}(\Omega;\mathbb{R}^n)\times L^p(\Omega;\mathbb R^m)$ with $u_n \to u$ strongly in $L^1(\Omega;\mathbb R^n)$, $v_n \weakstar v$ in $L^p(\Omega;\mathbb R^m)$ and such that
$$
\displaystyle{\overline{J_{CQf}}(u,v)\geq \lim_n \int_\Omega CQf(x,u_n(x),v_n(x), \nabla u_n(x))\,dx - \delta.}
$$
Applying the results in \cite{CRZ1} and \cite{CRZ2},
for each $n$ there exists a sequence $\{(u_{n,k},v_{n,k}\}$ converging to $(u_n,v_n)$ weakly in $W^{1,1}(\Omega;\mathbb R^n)\times L^p(\Omega;\mathbb R^m)$ such that
$$
\displaystyle{\int_\O CQf(x, u_n(x), v_n(x), \nabla u_n(x))\,dx = \lim_k \int_\O f(x, u_{n,k}(x), v_{n,k}(x), \nabla u_{n,k}(x))\,dx.}
$$
Consequently
\begin{equation}\label{2.17BFMbend2}
\displaystyle{\overline{J_{CQf}}(u,v)\geq \lim_n \lim_k \int_\O f(x, u_{n,k}(x), v_{n,k}(x), \nabla u_{n,k}(x))\,dx- \delta,}
\end{equation}
$$
\displaystyle{\lim_n\lim_k \| u_{n,k}- u\|_{L^1}=0.}
$$
and
$$
\displaystyle {v_{n,k} \weakstar v \hbox{ in }L^p \hbox{ as }k \to +\infty \hbox{ and }n \to +\infty}.
$$
Via a diagonal argument (remind that weak $L^p$ and weak $\ast L^\infty$- topologies are metrizable on bounded sets), there exists a sequence $\{(u_{n, k_n},v_{n,k_n} )\}$ satisfying $u_{n,k_n} \to u$ in $L^1(\Omega;\mathbb R^n)$, $v_{n, k_n} \weakstar v$ in $L^p(\Omega;\mathbb R^m)$ and realizing the double limit in the right hand side of (\ref{2.17BFMbend2}).
Thus, it results
$$
\displaystyle{\overline{J_{CQf}}(u,v) \geq \lim_n \int_\O f(x, u_{n ,k_n}(x), v_{n,k_n}(x), \nabla u_{n, k_n}(x))\,dx -\delta \geq \overline{J}_p(u,v)- \delta.}
$$
Letting $\delta$ go to $0$ the conclusion follows.
\end{proof}

\subsection{Some Results on Measure Theory}

Let $\O$ be a generic open subset of $\mathbb R^N$, we denote by
$\M(\O)$ the space of all signed Radon measures in $\O$ with bounded
total variation. By the Riesz Representation Theorem, $\M(\O)$ can
be identified to the dual of the separable space $\C_0(\O)$ of
continuous functions on $\O$ vanishing on the boundary $\partial
\O$. The $N$-dimensional Lebesgue measure in $\Rb^N$ is designated
as $\LL^N$ while $\HH^{N-1}$ denotes the $(N-1)$-dimensional
Hausdorff measure. If $\mu \in \M(\O)$ and $\lambda \in \M(\O)$ is a
nonnegative Radon measure, we denote by $\frac{d\mu}{d\lambda}$ the
Radon-Nikod\'ym derivative of $\mu$ with respect to $\lambda$. By a
generalization of the Besicovich Differentiation Theorem (see
\cite[Proposition 2.2]{Ambrosio-Dal Maso}), it can be proved that  there exists a
Borel set $E \subset \O$ such that $\lambda(E)=0$ and
$$
\frac{d\mu}{d\lambda}(x)=\lim_{\rho \to 0^+} \frac{\mu(x+\rho \, C))}{\lambda(x+\rho \, C))}
$$
for all $x \in {\rm Supp }\, \mu \setminus E$ and any open convex
set $C$ containing the origin. (Recall that the set $E$ is independent of $C$.)

We also recall the following generalization of Lebesgue-Besicovitch Differentiation Theorem, as stated in \cite[Theorem 2.8]{Fonseca-Muller-relaxation}.

\begin{Theorem}\label{FMrthm2.8}
If $\mu$ is a nonnegative Radon measure and if $f \in L^1_{loc}(\mathbb R^d;\mu)$ then
$$
\displaystyle{\lim_{\e \to 0^+}\frac{1}{\mu(x+\e C)}\int_{x+ \e C}|f(y)-f(x)|d \mu(y)=0,}
$$
for $\mu$-a.e. $x \in \mathbb R^d$ and for every bounded, convex, open set $C$ containing the origin.
\end{Theorem}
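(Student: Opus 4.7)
The plan is to derive the statement from the generalization of the Besicovitch differentiation theorem recalled just above (from \cite[Proposition 2.2]{Ambrosio-Dal Maso}): for any signed Radon measure $\nu$ absolutely continuous with respect to $\mu$, the Radon-Nikod\'ym derivative is realized as the limit of $\nu(x+\rho C)/\mu(x+\rho C)$ as $\rho \to 0^+$, for $\mu$-a.e.\ $x$ and every bounded open convex set $C$ containing the origin, with the $\mu$-null exceptional set independent of $C$. This is the ingredient that distinguishes the present statement from the classical one with balls.

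First I would fix a countable dense subset $\{t_k\}_{k \in \Nb}$ of $\Rb$ and apply the above result to each absolutely continuous measure $\nu_k := |f-t_k|\,\mu$. This produces, for each $k$, a $\mu$-null Borel set $E_k$ such that
\[
\lim_{\e \to 0^+}\frac{1}{\mu(x+\e C)}\int_{x+\e C}|f(y)-t_k|\,d\mu(y)=|f(x)-t_k|
\]
for every $x\notin E_k$ and every admissible $C$. Setting $E:=\bigcup_k E_k$ we still have $\mu(E)=0$, and crucially this $E$ does not depend on $C$.

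For $x \notin E$ and any $\eta>0$, I would choose $t_k$ with $|f(x)-t_k|<\eta$. The triangle inequality gives
\[
\frac{1}{\mu(x+\e C)}\int_{x+\e C}|f(y)-f(x)|\,d\mu(y) \le \frac{1}{\mu(x+\e C)}\int_{x+\e C}|f(y)-t_k|\,d\mu(y)+|t_k-f(x)|,
\]
whose $\limsup$ as $\e\to 0^+$ is at most $2|f(x)-t_k|<2\eta$. Since $\eta$ is arbitrary the desired limit equals $0$.

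The main subtlety I anticipate is that the Ambrosio--Dal Maso statement above is formulated for measures of finite total variation, whereas here $\mu$ may be infinite and $f$ is only locally integrable, so $|f-t_k|\,\mu$ need not be finite. This is resolved by exhausting $\Rb^d$ by relatively compact open sets $\Omega_j\nearrow\Rb^d$ and applying the cited result to $\mu\res\Omega_j$ and the finite measure $\chi_{\Omega_j}|f-t_k|\,\mu$; for $x$ in the interior of $\Omega_j$ and $\e$ sufficiently small one has $x+\e C\subset\Omega_j$, so the conclusion on each $\Omega_j$ extends to $\Rb^d$ after a further countable union of $\mu$-null sets, preserving the independence from $C$.
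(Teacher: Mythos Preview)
The paper does not prove this theorem: it is merely \emph{recalled} from \cite[Theorem~2.8]{Fonseca-Muller-relaxation}, with no argument given. So there is no proof in the paper to compare against.

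That said, your derivation is correct and is in fact the standard way to deduce the Lebesgue-point statement from the differentiation result of \cite[Proposition~2.2]{Ambrosio-Dal Maso} quoted just above. The key feature you exploit---that the $\mu$-null exceptional set in the Ambrosio--Dal Maso theorem is independent of the convex set $C$---is precisely what makes the conclusion hold simultaneously for all admissible $C$. The countable-dense-values plus triangle-inequality argument is the usual one, and your localization to bounded open sets to reduce to finite measures is the right way to handle the $L^1_{\mathrm{loc}}$ hypothesis. One small remark: as stated in the paper the Ambrosio--Dal Maso conclusion is asserted for $x\in\mathrm{Supp}\,\nu\setminus E$; for $x\in\mathrm{Supp}\,\mu\setminus\mathrm{Supp}\,\nu_k$ the averages vanish for small $\varepsilon$, and since $|f-t_k|=0$ $\mu$-a.e.\ on a neighborhood of such $x$, the identity $|f(x)-t_k|=0$ holds $\mu$-a.e.\ there too, so this causes no trouble.
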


In particular, if $v \in L^\infty(\O;\mathbb{R}^m)$, then, for $\mathcal{L}^N$-a.e. $x\in\O$
\begin{equation}\label{continuity-point}
\lim_{\varepsilon\rightarrow 0}\frac{1}{|B_\varepsilon(x)|}\int_{B_\varepsilon(x)} |v(y)-v(x)|\,dy=0.
\end{equation}
In the sequel we exploit the Calder\'on-Zygmund theorem for $u \in BV$, cf. \cite[Theorem 3.83, page 176]{AFP}
\begin{equation}\label{differentiability-point}
\lim_{\varepsilon\rightarrow 0}\frac{1}{\varepsilon|B_\varepsilon(x)|}\int_{B_\varepsilon(x)} |u(y)-u(x)-\nabla u(x)(y-x)|\,dy=0\quad \mathcal{L}^N-a.e.\ x\in\Omega.
\end{equation}



\section{Lower semicontinuity in $W^{1,1}\times L^p$}\label{11p}
This section is devoted to provide a lower bound for the integral representation of
${\overline J}_p$ in \eqref{Jpbar} under assumptions $({\mathrm H}1_p)$ and $({\mathrm H}2_p)$, as stated in Theorem \ref{maintheoremprelax}. Clearly this is equivalent to prove the lower semicontinuity with respect to the $L^1$-strong $\times$ $L^p$-weak topology of $\displaystyle{\int_\Omega CQf(x,u(x), v(x), \nabla u(x))dx}$, when $(u,v)\in W^{1,1}\times L^p$.


Indeed we prove the following result

\begin{Theorem}\label{maintheoremp}
Let $\Omega $ be a bounded open set of $\mathbb R^N$, and let $f:\Omega\times \mathbb{R}^n \times \mathbb{R}^m\times \mathbb{R}^{n\times N}\rightarrow [0,+\infty)$ be a continuous function. Assuming that $f$ satisfies hypotheses $(\mathrm{H}1_p)$ and $(\mathrm{H}2_p)$, and it is convex-quasiconvex, it results that
$ \displaystyle{\int_\O f(x,u(x),v(x),\nabla u(x))\,dx}$ is lower semicontinuous
in $ W^{1,1}(\Omega;\mathbb R^n)\times L^p(\Omega;\mathbb R^m)$, with respect to the ($L^1$-strong $\times$ $L^p$-weak)- convergence.
\end{Theorem}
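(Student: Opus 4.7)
The plan is a blow-up argument in the spirit of Fonseca--M\"uller, combined with the convex-quasiconvexity machinery of \cite{Fonseca-Kinderlehrer-Pedregal_1}. Let $u_n \to u$ in $L^1$, $v_n \rightharpoonup v$ in $L^p$, and assume the liminf is finite and attained along a subsequence. Up to a further subsequence, the positive measures $\mu_n := f(\cdot, u_n, v_n, \nabla u_n)\,\LL^N \res \Omega$ converge weakly-$\ast$ to some $\mu \in \M(\overline\Omega)$. Decomposing $\mu = \tfrac{d\mu}{d\LL^N}\LL^N + \mu^s$ with $\mu^s \geq 0$, it suffices to prove
$$\tfrac{d\mu}{d\LL^N}(x_0) \;\geq\; f(x_0, u(x_0), v(x_0), \nabla u(x_0)) \qquad \text{for }\LL^N\text{-a.e.\ } x_0 \in \Omega.$$

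Fix a good point $x_0$: one at which \eqref{differentiability-point} holds for $u$, which is an $L^p$-Lebesgue point of $v$ (a.e.\ valid via Theorem~\ref{FMrthm2.8} applied to $|v-c|^p$ for $c$ in a countable dense set), and at which $\tfrac{d\mu}{d\LL^N}(x_0)$ exists and is finite. Choose $r_k \downarrow 0$ with $\mu(\partial B_{r_k}(x_0)) = 0$, so that $\tfrac{d\mu}{d\LL^N}(x_0) = \lim_k \tfrac{1}{|B_{r_k}|}\lim_n \int_{B_{r_k}(x_0)} f(x, u_n, v_n, \nabla u_n)\,dx$. After the change of variable $y = (x - x_0)/r_k$, set
$$u_n^k(y) := \frac{u_n(x_0 + r_k y) - u(x_0)}{r_k}, \qquad v_n^k(y) := v_n(x_0 + r_k y).$$
Using that strong $L^1$ balls and bounded weak-$L^p$ sets are metrizable, a diagonal extraction $n(k)$ yields $u_{n(k)}^k(y) \to \nabla u(x_0)\, y$ in $L^1(B_1;\mathbb R^n)$, $v_{n(k)}^k \rightharpoonup v(x_0)$ in $L^p(B_1;\mathbb R^m)$, and the averaged integrals still converging to $\tfrac{d\mu}{d\LL^N}(x_0)$; the uniform bounds on $\|v_{n(k)}^k\|_{L^p(B_1)}$ and $\|\nabla u_{n(k)}^k\|_{L^1(B_1)}$ needed for this step follow from $(\mathrm{H}1_p)$ together with the finiteness of $\tfrac{d\mu}{d\LL^N}(x_0)$.

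Next, I would use $(\mathrm{H}2_p)$ to freeze the first two slots of $f$ at $(x_0, u(x_0))$: choosing $n(k)$ large enough so that $u_{n(k)}(x_0 + r_k\cdot) \to u(x_0)$ in $L^1(B_1)$, pointwise a.e.\ convergence along a further subsequence and a truncation on the exceptional set confine $(x, u)$ to a fixed compact, and the $(\mathrm{H}2_p)$ modulus absorbed against the $L^1$-bound on $1 + |v_{n(k)}^k|^p + |\nabla u_{n(k)}^k|$ yields only an $o(1)$ error. To fit the integrand into the convex-quasiconvexity inequality I introduce the corrections $\eta_k := v_{n(k)}^k - v(x_0)$, replaced by $\tilde\eta_k := \eta_k - |B_1|^{-1}\int_{B_1}\eta_k$ so as to have zero mean (the mean tends to $0$ by weak convergence), and $\varphi_k(y) := u_{n(k)}^k(y) - \nabla u(x_0)\, y$, replaced by $\tilde\varphi_k := \theta_k \varphi_k$ with $\theta_k$ a Lipschitz cutoff supported on a thin annulus adjacent to $\partial B_1$ selected by a De Giorgi pigeonhole argument so that $\int (1+|v_{n(k)}^k|^p + |\nabla u_{n(k)}^k|)$ is small on its support. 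The errors from these two corrections are bounded using Proposition~\ref{locallylipschitzLP} (with the dangerous powers $|v|^{p-1}$ and $|\xi|^{1/p'}$ handled by H\"older's inequality) and by $(\mathrm{H}1_p)$ applied on the annulus, and both tend to $0$ as $k \to \infty$.

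Since $\tilde\varphi_k \in W_0^{1,1}(B_1;\mathbb R^n)$ and $\tilde\eta_k \in L^p(B_1;\mathbb R^m)$ has zero mean, the remark following \eqref{qcx-cx-rel} makes them admissible in the convex-quasiconvexity inequality. Applied to $(v(x_0), \nabla u(x_0))$, this yields
$$f(x_0, u(x_0), v(x_0), \nabla u(x_0)) \;\leq\; \frac{1}{|B_1|}\int_{B_1} f\bigl(x_0, u(x_0), v(x_0)+\tilde\eta_k, \nabla u(x_0) + \nabla\tilde\varphi_k\bigr)\,dy,$$
and sending $k \to \infty$, combined with the freezing and correction estimates, gives $f(x_0,u(x_0),v(x_0),\nabla u(x_0)) \leq \tfrac{d\mu}{d\LL^N}(x_0)$, as required. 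The main obstacle I anticipate is the joint compatibility of the mean correction for $\eta_k$ and the De Giorgi slicing for $\varphi_k$: the linear growth in $\xi$ forces the slicing to monitor the full $L^1$ energy of $\nabla u_{n(k)}^k$, while the $p$-growth in $v$ forces the mean correction to rely on the mixed-Lipschitz bound of Proposition~\ref{locallylipschitzLP}, and the two resulting error terms must be made to vanish simultaneously along the same diagonal.
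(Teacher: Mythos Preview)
Your overall strategy matches the paper's: blow up, diagonalize, freeze $(x,u)$, then modify the competitors to fit the convex-quasiconvexity inequality. The gap is in the freezing of the $u$-variable. You propose to invoke the modulus in $(\mathrm{H}2_p)$ after arranging pointwise a.e.\ convergence of $u_{n(k)}(x_0 + r_k\cdot) \to u(x_0)$ together with a ``truncation on the exceptional set''. But $(\mathrm{H}2_p)$ multiplies $\omega_K$ against the weight $1 + |v_{n(k)}^k|^p + |\nabla u_{n(k)}^k|$, which is merely bounded in $L^1(B_1)$ and \emph{not} equi-integrable; a.e.\ convergence of the modulus to zero therefore does not force the product to vanish in the limit. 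A set-truncation does not rescue this: restricting to the complement of a small set leaves you with test functions defined only on a subdomain (so convex-quasiconvexity cannot be applied), while a hard replacement of $u_{n(k)}^k$ by $w_0$ on the bad set destroys $W^{1,1}$ regularity across its boundary; and if instead you add back the integral over the excised set, the growth bound yields a term controlled by $\int_{E_k}(1+|v_{n(k)}^k|^p+|\nabla u_{n(k)}^k|)$, which has no reason to be small when only $|E_k|\to 0$.

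The paper resolves this by inserting, \emph{before} the freezing step, a smooth truncation \`a la Fonseca--M\"uller (their Step~4): one composes with a Lipschitz cutoff of the scalar quantity $|w_n - w_0| + |v_n|/\lambda_n$, choosing levels $s_n < t_n$ and a scale $\lambda_n\to\infty$ via the coarea formula and \cite[Lemma~2.6]{Fonseca-Muller-quasiconvex} (this is also why the paper first reduces to smooth $u_n,v_n$, so that $\nabla|v_n|$ makes sense). The output sequences $\widetilde w_n,\widetilde v_n$ satisfy $\widetilde w_n \to w_0$ in $L^\infty(B_1)$ and $\widetilde v_n \rightharpoonup v(x_0)$ in $L^p$, with the same asymptotic lower bound for $g(x_0)$. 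With $L^\infty$ convergence in hand, $(x_0 + r_k y,\, u(x_0) + r_k \widetilde w_n(y))$ stays in a fixed compact and the modulus $\omega_K(\cdot)$ tends to zero \emph{uniformly}, so the weighted error really is $o(1)$. Your De~Giorgi slicing and mean-correction are fine in spirit and correspond to the paper's Step~6; the concern you flag about their joint compatibility is real but secondary compared to the missing $L^\infty$ truncation, and in the paper both are carried out on the already-truncated sequences.
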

 \begin{proof}[Proof] The proof is mostly a combination of the theorems in \cite{Fonseca-Muller-relaxation} and \cite{Fonseca-Kinderlehrer-Pedregal_1}, which used already some ideas from \cite{Fonseca-Muller-quasiconvex}.
For convenience of the reader we present here some details, however we may refer to some separate results in the papers mentioned above.

Let $$G(u,v)  =  \displaystyle{\int_\O f(x,u(x),v(x),\nabla u(x))\,dx.}
$$

It's enough to prove that for every $(u,v)\in W^{1,1}(\Omega;\mathbb R^n)\times L^p(\Omega;\mathbb R^m)$, $G(u,v)\le\liminf J(u_n,v_n)$ for any $u_n \to u$ in $L^1$ with $u_n\in W^{1,1}(\Omega;\mathbb{R}^n)$ and $v_n \rightharpoonup v$ in $L^p$.

Using the same arguments as in \cite[Proof of Theorem II.4]{Acerbi-Fusco} (see also \cite[Proposition 2.4]{Fonseca-Muller-quasiconvex}) and the density of smooth functions in $L^p$, we can reduce to the case where $u_n\in C_0^\infty(\mathbb{R}^N;\mathbb{R}^n)$ and $v_n \in C^\infty_0(\mathbb{R}^N;\mathbb R^m)$.

Moreover, we can also suppose
$$\displaystyle{\liminf_{n \to \infty} J(u_n,v_n)=\lim_{n \to \infty} J (u_n,v_n)<+\infty.}$$ Then $J(u_n,v_n)$ is bounded and so, up to a subsequence, $\mu_n:=f(x,u_n,v_n,\nabla u_n)dx\weakstar \mu$ in the sense of measures for some positive measure $\mu$.

By the Radon-Nikodym theorem, $\mu=g\mathcal{L}^N+\mu_s$ for some $g\in L^1(\O)$, with $\mu_s$ singular with respect to ${\mathcal L}^N$.
It will be enough to prove the following inequality:

\begin{equation}\label{absolutely-continuousp}g(x)\ge f(x,u(x),v(x),\nabla u(x)),\ \mathcal{L}^N-a.e.\ x\in\O.\end{equation}

Indeed, once proved \eqref{absolutely-continuousp}, since $\mu_n \weakstar\mu$, by the lower semicontinuity of $\mu$, and since $\mu_s$ is nonnegative

$$\begin{array}{rcl}
\displaystyle{\lim_{n\rightarrow +\infty} J(u_n,v_n)} & = & \displaystyle{\lim_{n\rightarrow +\infty} \int_\O \,f(x,u_n(x),v_n(x),\nabla u_n(x))\,dx}\vspace{0.2cm}\\
& \geq & \displaystyle{\int_\O d\mu(x) = \int_\O g(x)\,dx
+\int_\O d\mu_s(x)}\vspace{0.2cm}\\
& \ge & \displaystyle{\int_\O f(x,u(x),v(x),\nabla u(x))\,dx.}
\end{array}$$

In order to prove \eqref{absolutely-continuousp}, we follow the proofs of Theorem 2.1 in \cite{Fonseca-Muller-quasiconvex} and condition (2.3) in \cite{Fonseca-Kinderlehrer-Pedregal_1}. We start freezing the terms $x$ and $u$. This will be achieved through Steps 1 to 5.

By the Besicovitch derivation theorem
\begin{equation}\label{characterization-densityp}
g(x)=\lim_{\varepsilon\rightarrow 0}\frac{\mu(B_\varepsilon(x))}{|B_\varepsilon(x)|}\in\mathbb{R}\quad \mathcal{L}^N-a.e.\ x\in\O.
\end{equation}


Let $x_0$ be any element of $\Omega$ satisfying (\ref{characterization-densityp}), (\ref{differentiability-point}) and (\ref{continuity-point}) (notice that such an $x_0$ can be taken in $\Omega$ up to a set of Lebesgue-measure zero) and let's prove that $g(x_0)\ge f(x_0,u(x_0),v(x_0),\nabla u(x_0))$.
First remark that, as noticed before, since $v_n\rightharpoonup v$ in $L^p$, we have $||v_n||_{L^p},||v||_{L^p}\le C$.

\noindent {\bf Step 1. Localization.} This part can be reproduced in the same way as in \cite{Fonseca-Muller-quasiconvex}: pages 1085-1086. We present some details for the reader's convenienece.  We start providing a first estimate for $g$. Observe that we can choose a sequence $\varepsilon\rightarrow0^{+}$ such that $\mu\left(  \partial
B_\varepsilon\left(  x_{0}\right)  \right)  =0$. Let $B:= B_1(0)$.  Applying Proposition
1.203 $iii)$ in \cite{FL},
$$\begin{array}{lll}
g(x_0)  =  \displaystyle{\lim_{\varepsilon\rightarrow 0}\frac{1}{\varepsilon^N}\frac{\mu(B_\varepsilon(x_0))}{|B|}
}\vspace{0.2cm}\\
=  \displaystyle{\limsup_{\varepsilon\rightarrow 0}\lim_{n\rightarrow +\infty}\frac{1}{\varepsilon^N\, |B|}\int_{B_\varepsilon(x_0)}f(y,u_n(y),v_n(y),\nabla u_n(y))\,dy
}\vspace{0.2cm}\\
 =  \displaystyle{\limsup_{\varepsilon\rightarrow 0}\lim_{n\rightarrow +\infty}\frac{1}{|B|}\int_Bf(x_0+\varepsilon x,u_n(x_0+\varepsilon x),v_n(x_0+\varepsilon x),\nabla u_n(x_0+\varepsilon x))\,dx}\vspace{0.2cm}\\
\ge  \displaystyle{\limsup_{\varepsilon\rightarrow 0}\lim_{n\rightarrow +\infty}\frac{1}{|B|}\int_{B}f(x_0+\varepsilon x,u(x_0)+\varepsilon w_{n,\varepsilon}(x),v_n(x_0+\varepsilon x),\nabla w_{n,\varepsilon}(x))\,dx}
\end{array}$$
where $\displaystyle{w_{n,\varepsilon}(x)=\frac{u_n(x_0+\varepsilon x)-u(x_0)}{\varepsilon}.}$
\medskip

\noindent {\bf Step 2. Blow-up.} Next we will ``identify the limits'' of $w_{n,\varepsilon}$ and $v_n(x_0+\varepsilon \cdot)$ in a sense to be made precise below. Define $w_0:B\rightarrow \mathbb{R}^n$ such that $w_0(x)=\nabla u(x_0)x$. Then
$$\begin{array}{ll}
\displaystyle{\lim_{\varepsilon\rightarrow 0}\lim_{n\rightarrow +\infty}||w_{n,\varepsilon}-w_0||_{L^1(B)}} =  \displaystyle{\lim_{\varepsilon\rightarrow 0}\lim_{n\rightarrow +\infty} \int_B\left| \frac{u_n(x_0+\varepsilon x)-u(x_0)}{\varepsilon}-\nabla u(x_0) x \right|\,dx
}\vspace{0.2cm}\\
 =  \displaystyle{\lim_{\varepsilon\rightarrow 0} \frac{1}{\varepsilon^{N+1}}\int_{B_\varepsilon(x_0)}\left| u(y)-u(x_0)-\nabla u(x_0)(y-x_0) \right|\,dy=  0}
\end{array}$$
where we have used (\ref{differentiability-point}) in the last identity.

Let $q$ be the H\"{o}lder's conjugate exponent of $p$. Since $L^q$ is separable, consider $\{\varphi_l\}$ a countable dense set of functions in $L^q(B)$. Then
$$\displaystyle{\lim_{\varepsilon\rightarrow 0}\lim_{n\rightarrow +\infty} \left|\int_B(v_n(x_0+\varepsilon x)-v(x_0))\varphi_l(x)dx\right| = \lim_{\varepsilon\rightarrow 0}\left|\int_B(v(x_0+\varepsilon x)-v(x_0))\varphi_l(x)dx\right|
= 0}
$$
where we have used in the last identity  the fact that  $x_0$ is a Lebesgue point for $v$.
\medskip

\noindent{\bf Step 3. Diagonalization.} Arguing as in \cite{Fonseca-Muller-relaxation} and \cite{Fonseca-Kinderlehrer-Pedregal_1} we can use a diagonalization argument to find $\varepsilon_n\in\mathbb{R}^+$, $w_n\in W^{1,\infty}(\mathbb{R}^N;\mathbb{R}^n)$ and $v_n\in L^p(B;\mathbb{R}^m)\cap C^\infty_0(\mathbb{R}^N;\mathbb R^m)$, such that $\varepsilon_n \rightarrow 0$, $w_n\rightarrow w_0$ in $L^1(B;\mathbb{R}^n)$, $v_n\rightharpoonup v(x_0)$ in $L^p(B;\mathbb{R}^m)$ as $n\rightarrow +\infty$ and  $$g(x_0)\ge \lim_{n\rightarrow +\infty}\frac{1}{|B|}\int_{B}f(x_0+\varepsilon_n x,u(x_0)+\varepsilon_n w_n(x),v_n(x),\nabla w_n(x))\,dx.$$

\noindent{\bf Step 4. Truncation.}
We show that the sequences $\left\{w_n\right\}
$ and $\left\{v_n\right\}  $ constructed in the preceding steps can be replaced by sequences
$\left\{  \widetilde{w}_n\right\}  \subset W_{\text{loc}}^{1,\infty}\left(
\mathbb{R}^{N};\mathbb{R}^n\right)  $ and $\left\{  \widetilde{v}%
_n\right\}  \subset L^p(B;\mathbb R^m) \cap C^\infty_0(\mathbb{R}^N;\mathbb R^m) $ such
that $\left\Vert \widetilde{w}_n\right\Vert _{W^{1,1}\left(  B;\mathbb R^n\right)  }\leq C$, $\widetilde{w}_n\rightarrow w_{0}$ in
$L^{\infty}\left(  B;\mathbb{R}^n\right)$, \;\;\;\;\;\;\,\;\;\,\;\;\,\;
 $\left\Vert \widetilde{v}_n\right\Vert _{L^p\left(  B;\mathbb R^n\right)  }\leq
C,~\widetilde{v}_n\rightharpoonup v(x_0)$ in $L^p\left(  B;\mathbb R^m\right)  $ and
\[
g\left(  x_{0}\right)  \geq\lim_{k\rightarrow\infty}\frac{1}%
{\mathcal{L}^{N}\left(  B\right)  }\int_{B}f\left(
x_{0}+r_n x , u(x_0) +r_n\widetilde{v}_n\left(  x\right) ,
\tilde{v}_n(x), \nabla\widetilde{w}_n\left(  x\right)\right) dx.
\]
Let $0<s<t<1$ and $\lambda >1$ and define $\varphi_{s,t}$ a cut-off function such that
$0\leq\varphi_{s,t}\leq1,$ $\varphi_{s,t}\left(  \tau\right)  =1$ if $\tau\leq
s,~\varphi_{s,t}\left(  \tau\right)  =0$ if $\tau\geqslant t$ and $\left\Vert
\varphi_{s,t}^{\prime}\right\Vert _{\infty}\leq\frac{C}{
t-s}.$

Set
\begin{align*}
w_{s,t}^{n,\lambda}\left(  x\right)   &  :=w_{0}\left(  x\right)  +\varphi
_{s,t}\left(  \left\vert w_{n}\left(  x\right)  -w_{0}\left(  x\right)
\right\vert + \frac{\vert v_{n}\left(  x\right)\vert}{\lambda} \right)  \left(
w_{n}\left(  x\right)  -w_{0}\left(  x\right)  \right)  ,\\
v_{s,t}^{n,\lambda}\left(  x\right)   &  :=v(x_0)+\varphi_{s,t}\left(  \left\vert
w_{n}\left(  x\right)  -w_{0}\left(  x\right)  \right\vert +\frac{\left\vert
v_{n}\left(  x\right)  \right\vert}{\lambda} \right)  (v_{n}\left(  x\right)-v(x_0))  .
\end{align*}
Clearly,
\begin{equation}
\left\Vert w_{s,t}^{n,\lambda}-w_{0}\right\Vert _{\infty}\leq t \hbox{ and } v_{s,t}^{n,\lambda}\rightharpoonup v(x_0) \hbox{ in }L^p \hbox{ as }n \to +\infty.\label{2.9}%
\end{equation}
Define%
\[
h_{n}\left(x,  s,b, A\right)  :=f(x_0+ r_n x, u(x_0)+r_n s, b, A).
\]
By
the growth conditions there exists $n_{0}\in\mathbb{N}$ such that for all
$n\geq n_{0}$,
\begin{equation}
c\left(  | b|^p +|A|\right)  -C\leq
h_{n}\left(  x,s,b, A\right)  \leq C\left( |b|^p +|A|+1\right)  \label{2.10}%
\end{equation}
for some constants $c,~C>0.$
Consequently there exist $ C>0$ such that
$$
-C \leq h_n(x, w_0(x), v(x_0), \nabla w_0(x)) \leq C
$$

Also%
\begin{equation}\nonumber
\begin{array}{ll}
\displaystyle{\int_{B}h_{n}\left(x,   w_{s,t}^{n,\lambda}\left(  x\right),v_{s,t}^{n,\lambda}\left(  x\right)   ,\nabla w_{s,t}
^{n,\lambda}\left(  x\right)   \right)  dx
=}
\\
=\displaystyle{\int_{B\cap\left\{  \left\vert w_{n}\left(  x\right)  -w_{0}\left(
x\right)  \right\vert +\frac{| v_{n}\left(  x\right) |}{\lambda}\leq s\right\} }h_{n}\left(x,  w_{n}, v_{n}, \nabla w_{n}\right)  dx}\\
\displaystyle{+\int_{B \cap\left\{  s<\left\vert w_{n}\left(  x\right)
-w_{0}\left(  x\right)  \right\vert +\frac{| v_{n}\left(  x\right)|}{\lambda}\leq t\right\}  }h_{n}\left(  x,w_{s,t}^{n,\lambda},v_{s,t}^{n,\lambda}, \nabla w_{s,t}%
^{n,\lambda}\right)  dx}\\
\displaystyle{+\int_{B\cap\left\{  \left\vert w_{n}\left(  x\right)
-w_{0}\left(  x\right)  \right\vert +\frac{|v_{n}(  x)|}{\lambda}>t\right\}  }h_{n}\left(  x,w_{0}\left(  x\right) ,v(x_0)  ,\nabla
w_{0}\left(  x\right) \right)  dx}\\
\displaystyle{  :=I_{1}+I_{2}+I_{3}.}
\end{array}
\end{equation}
By the growth conditions and the definition of $h_{n}$%
we have that
\[
I_{3}\leq C\left\vert \left\{  x\in B:\left\vert w_{n}\left(
x\right)  -w_{0}\left(  x\right)  \right\vert +\frac{| v_{n}\left(
x\right) |}{\lambda}> t\right\}  \right\vert .
\]
On the other hand, if $s<\left\vert w_{n}\left(  x\right)  -w_{0}\left(
x\right)  \right\vert +\frac{| v_{n}\left(  x\right) |}{\lambda} <t$ then%
$$
\begin{array}{ll}
\nabla w_{s,t}^{n,\lambda}\left(  x\right)   =\nabla u\left(  x_{0}\right)
+\varphi_{s,t}\left(  \left\vert w_{n}\left(  x\right)  -w_{0}\left(
x\right)  \right\vert +\frac{| v_{n}\left(  x\right)|}{\lambda}  \right)
\left(\nabla w_{n}\left(  x\right)  -\nabla w_{0}\left(  x\right)
\right)+ \\
\left(  w_{n}\left(  x\right)  -w_{0}\left(  x\right)  \right)
\otimes\varphi'_{s,t}\left(  \left\vert w_{n}\left(  x\right)  -w_{0}\left(  x\right)
\right\vert +\frac{|v_{n}|\left(  x\right) }{\lambda} \right)
\nabla\left(  \left\vert w_{n}\left(  x\right)  -w_{0}\left(  x\right)
\right\vert +\frac{| v_{n}\left(  x\right)|}{\lambda} \right).
\end{array}
$$

By $\left(  \ref{2.10}\right)  $ we have

\begin{align}
&I_{2}   \leq C\int_{B\cap\left\{  s<\left\vert w_{n}\left(
x\right)  -w_{0}\left(  x\right)  \right\vert +\frac{| v_{n}(x) |}{\lambda} \leq t\right\}  }\left(1+  \left\vert \nabla
w_{n}\left(  x\right)  -\nabla w_{0}\left(  x\right)  \right\vert +\left\vert
 v_{n}\left(  x\right) -v(x_0) \right\vert^p \right)  dx +\nonumber\\
&\frac{C}{t-s}\int_{B\cap\left\{  s<\left\vert w_{n}\left(
x\right)  -w_{0}\left(  x\right)  \right\vert +\frac{\left\vert v_{n}\left(
x\right)  \right\vert}{\lambda} \leq t\right\}  } \left\vert w_{n}\left(
x\right)  -w_{0}\left(  x\right) \right\vert|\nabla(  \left\vert w_{n}\left(  x\right)
-w_{0}\left(  x\right)  \right\vert +\frac{\left\vert v_{n}\left(  x\right)
\right\vert}{\lambda})|dx. \nonumber
\end{align}

We remark that for almost every $t$ and $\lambda$ we have
\begin{equation}
\lim_{s\rightarrow t^{-}}\int_{\left\{  s<\left\vert w_{n}\left(  x\right)
-w_{0}\left(  x\right)  \right\vert +\frac{|v_n(x)|}{\lambda}<t\right\}  }\left(  1+\left\vert \nabla
w_{n}\left(  x\right)  -\nabla u\left(  x_{0}\right)  \right\vert +\left\vert
 v_{n}\left(  x\right) -v(x_0) \right\vert^p \right)  dx=0\label{2.14}%
\end{equation}
and by the coarea formula%
\begin{equation}
\begin{array}{lll}
\displaystyle{ \lim_{s\rightarrow t^{-}}\frac{1}{t-s}\int_{B\cap\left\{
s<\left\vert w_{n}\left(  x\right)  -w_{0}\left(  x\right)  \right\vert
+\frac{\left\vert v_{n}\left(  x\right)  \right\vert}{\lambda} \leq t\right\}  }} \displaystyle{
\left\vert w_{n}\left(  x\right)  -w_{0}\left(  x\right)  \right\vert
 \cdot }\\
\;\;\;\,\,\,\;\;\;\;\;\,\,\,\;\;\;\,\;\;\;\;\;\,\,\,\,\,\,\;\,\;\;\,\,\,\;\;\displaystyle{\left\vert\nabla\left(
\left\vert w_{n}\left(  x\right)  -w_{0}\left(  x\right)  \right\vert
+\frac{\left\vert v_{n}\left(  x\right)  \right\vert}{\lambda} \right)\right\vert dx \leq} \label{coareaFM}\\
\le\displaystyle{ \lim_{s\rightarrow t^{-}}\frac{1}{t-s}\int_{B\cap\left\{
s<\left\vert w_{n}\left(  x\right)  -w_{0}\left(  x\right)  \right\vert
+\frac{\left\vert v_{n}\left(  x\right)  \right\vert}{\lambda} \leq t\right\}  }} \displaystyle{\left(
\left\vert w_{n}\left(  x\right)  -w_{0}\left(  x\right)  \right\vert
+\frac{\left\vert v_{n}\left(  x\right)  \right\vert}{\lambda} \right)}\\
\\
\;\;\;\,\,\,\;\;\;\;\;\,\,\,\;\;\;\,\;\;\;\;\;\,\,\,\,\,\,\;\,\;\;\,\,\,\;\displaystyle{\left|  \nabla\left(
\left\vert w_{n}\left(  x\right)  -w_{0}\left(  x\right)  \right\vert
+\frac{\left\vert v_{n}\left(  x\right)  \right\vert}{\lambda} \right)\right| dx}\\
 \displaystyle{\leq t\mathcal{H}^{N-1}\left(  \left\{  x\in B:\left\vert
w_{n}\left(  x\right)  -w_{0}\left(  x\right)  \right\vert +\frac{\left\vert
v_{n}\left(  x\right)  \right\vert}{\lambda} =t\right\}  \right) .}
\end{array}
\end{equation}
Due to the fact that $\{v_n\}$ is a $C^\infty_0(\mathbb{R}^N;\mathbb{R}^m)$ sequence, for every $C>0$ , for every $n$ there exists $\lambda_n \in [1,+\infty)$ such that $\lambda_n \leq \lambda_{n+1}$, $\lambda_n \to +\infty$ as $n \to +\infty$ and
$\displaystyle{\int_B \frac{|\nabla |v_n||}{\lambda_n}dx \leq C}$. On the other hand by
$\left(  \ref{2.10}\right)  $%
\begin{align*}
&  \int_{B\cap\left\{  \left\vert w_{n}\left(  x\right)
-w_{0}\left(  x\right)  \right\vert +\frac{\left\vert v_{n}\left(  x\right)
\right\vert}{\lambda_n} \leq1\right\}  }\left\vert \nabla\left(  \left\vert w_{n}\left(
x\right)  -w_{0}\left(  x\right)  \right\vert\right)  \right\vert dx\\
&  \leq\int_{B\cap\left\{  \left\vert w_{n}\left(  x\right)
-w_{0}\left(  x\right)  \right\vert +\frac{\left\vert v_{n}\left(  x\right)
\right\vert}{\lambda_n} \leq1\right\}  }\left(  \left\vert \nabla w_{n}\left(  x\right)
\right\vert +C\right)
dx\\
&  \leq C\int_{B}h_{n}\left( x, u_n, \left(  x\right), v_{n}\left(  x\right) ,\nabla
w_{n}\left(  x\right)   \right)  dx\leq C
\end{align*}
since $\left\{  w_{n}\right\}  $ and $\left\{  v_{n}\right\}  $ are
convergent.

Thus
$$
\int_{B\cap\left\{  \left\vert w_{n}\left(  x\right)
-w_{0}\left(  x\right)  \right\vert +\frac{\left\vert v_{n}\left(  x\right)
\right\vert}{\lambda_n} \leq1\right\}  }\left\vert \nabla\left(  \left\vert w_{n}\left(
x\right)  -w_{0}\left(  x\right)  \right\vert +\frac{\left\vert v_{n}\left(
x\right)  \right\vert}{\lambda_n} \right)  \right\vert dx \leq C.
$$
Recall  that $\displaystyle{\int_B \frac{|v_n|^p}{\lambda_n} dx\leq C}$ and by H\"{o}lder's inequality also $\displaystyle{\int_B \frac{|v_n|}{\lambda_n} dx\leq C}$.
Hence, by Lemma 2.6 in \cite{Fonseca-Muller-quasiconvex} there exists \newline$t_{n}%
\in\left[  \left(  \left\Vert w_{n}-w_{0}\right\Vert _{L^{1}}+\frac{\left\Vert
v_{n}\right\Vert _{L^{1}}}{\lambda_n}\right)  ^{\frac{1}{2}},\left\Vert \left(  \left\Vert
w_{n}-w_{0}\right\Vert _{L^{1}}+\frac{\left\Vert v_{n}\right\Vert _{L^{1}}}{\lambda_n}\right)
^{\frac{1}{3}}\right\Vert \right]  $ such that $\left(  \ref{2.14}\right)  $
and $\left(  \ref{coareaFM}\right)  $ hold (with $t=t_{n}$), and%
\[
t_{n}\mathcal{H}^{N-1}( \{  x\in B
%
:\left\vert w_{n}\left(  x\right)  -w_{0}\left(  x\right)  \right\vert
+\frac{\left\vert v_{n}\left(  x\right)  \right\vert}{\lambda_n} =t_{n}\} )
\leq\frac{C}{\ln\left(  \left\Vert w_{n}-w_{0}\right\Vert _{L^{1}%
}+\frac{\left\Vert v_{n}\right\Vert _{L^{1}}}{\lambda_n}\right)  ^{-\frac{1}{6}}}.
\]

According to $\left(  \ref{2.14}\right)  $ and $\left(  \ref{coareaFM}\right)
$ we may choose $0<s_{n}<t_{n}$ such that
\[
\int_{\left\{  s_{n}<\left\vert w_{n}\left(  x\right)  -w_{0}\left(  x\right)
\right\vert +\frac{|v_n(x)|}{\lambda_n}\leq t_{n}\right\}  }\left(  1+\left\vert \nabla w_{n}\left(
x\right)  -\nabla u\left(  x_{0}\right)  \right\vert +\left\vert
v_{n}\left(  x\right) -v(x_0) \right\vert^p \right)  dx=O(  \frac{1}{n})  ,
\]%
$$
\begin{array}{ll}
\displaystyle{\frac{1}{t_{n}-s_{n}}\int_{B\cap\left\{  s_{n}<\left\vert
w_{n}\left(  x\right)  -w_{0}\left(  x\right)  \right\vert +\frac{\left\vert
v_{n}\left(  x\right)  \right\vert}{\lambda_n} \leq t_{n}\right\}  }\left(  \left\vert
w_{n}\left(  x\right)  -w_{0}\left(  x\right)  \right\vert +\frac{\left\vert
v_{n}\left(  x\right)  \right\vert}{\lambda_n} \right)\cdot}\\
\;\;\,\,\;\;\,\;\;\;\,\;\;\;\;\;\,\,\,\,\,\;\;\;\;\;\;\,\,\;\;\;\,\,\,\,\;\;\;\,\,\;\;\displaystyle{\left|  \nabla\left(  \left\vert
w_{n}\left(  x\right)  -w_{0}\left(  x\right)  \right\vert +\frac{\left\vert
v_{n}\left(  x\right)  \right\vert}{\lambda_n} \right)\right| dx} \\
\displaystyle{\leq t_{n}\mathcal{H}^{N-1}\left(  \left\{x \in B:
\left\vert w_{n}\left(  x\right)  -w_{0}\left(  x\right)  \right\vert
+\frac{\left\vert v_{n}\left(  x\right)  \right\vert}{\lambda_n} =t_{n}\right\}  \right)
+O\left(  \frac{1}{n}\right)}
\end{array}
$$
Set
\[
\widetilde{w}_{n}\left(  x\right)  :=w_{s_{n},t_{n}}^{n,\lambda_n}\left(  x\right)
,\qquad\widetilde{v}_{n}\left(  x\right)  :=v_{s_{n},t_{n}}^{n,\lambda_n}\left(
x\right)
\]
thus by \eqref{2.9}
\[
\left\Vert \widetilde{w}_{n}-w_{0}\right\Vert_\infty \leq t_{n}\rightarrow
0,\qquad \widetilde{v}_{n}\rightharpoonup v(x_0) \hbox{ in } L^p \text{ as }n\rightarrow\infty.
\]
Using the previous estimates we conclude that
\begin{align*}
g\left(  x_{0}\right)   &  \geq\lim_{n\rightarrow\infty}\frac
{1}{\mathcal{L}^{N} ( B)  }\int_B f\left( x_{0} + r_n x,u(x_0)+r_{n} w_n(x),  v_n(x), \nabla w_n( x)  \right) dx\\
&  \geq\underset{n\rightarrow\infty}{\lim\inf}\frac{1}{\mathcal{L}^{N}\left(
B\right)  }\int_{B\cap\left\{  \left\vert w_n\left(
x\right)  -w_{0}\left(  x\right)  \right\vert +\frac{\left\vert v_n\left(
x\right)  \right\vert}{\lambda_n} \leq s\right\}  }h_{n}\left(x,  w_n\left(  x\right)
v_n(x),\nabla w_{n}\left(  x\right)  \right)  dx\\
&  \geq\underset{n\rightarrow\infty}{\lim\inf}\frac{1}{\mathcal{L}^{N}\left(
B\right)  }\int_B h_{n}\left(x,  \widetilde{w}_{n}\left(  x\right)
,\widetilde{v}_{n}\left(
x\right),\nabla \widetilde {w}_n\left(  x\right) \right)  dx-O\left(  \frac{1}{n}\right)+
\\
&-\frac{C}{\ln\left(
\left\Vert w_{n}-w_{0}\right\Vert _{L^{1}\left(  B\right)
}+\frac{\left\Vert v_{n}\right\Vert _{L^{1}\left(  B^{\prime}\right) }}{\lambda_n}\right)
^{-\frac{1}{6}}}\\
&  -C\mathcal{L}^{N}\left(  \left\{  x\in B:\left\vert w_{n}\left(  x\right)
-w_{0}\left(  x\right)  \right\vert +\frac{\left\vert v_{n}\left(  x\right)
\right\vert}{\lambda_n} >t_{n}\right\}  \right) \\
&  =\underset{n\rightarrow\infty}{\lim\inf}\frac{1}{\mathcal{L}^{N}\left(
B\right)  }\int_{B}h_{n}\left(x,  \widetilde{w}_{n}\left(  x\right)
,\widetilde{v}_{n}\left(
x\right) , \nabla\widetilde{w}_{n}\left(  x\right) \right)  dx,
\end{align*}
since
\[
t_{n}\geq\left(  \left\Vert w_{n}-w_{0}\right\Vert _{L^{1}(B)  }+\frac{|| v_n||_{L^{1}(B)}}{\lambda_n} \right) ^{\frac{1}{2}}%
\]
and thus%
\begin{align*}
\mathcal{L}^{N}\!\!\!\left(  \left\{  x\in B:\left\vert w_{n}\left(  x\right)
-w_{0}\left(  x\right)  \right\vert \!+\!\!\frac{\left\vert v_{n}\left(  x\right)
\right\vert}{\lambda_n} >t_{n}\right\}  \right) \!\!\!&\leq\frac{1}{t_{n}}\left(  \left\Vert
w_{n}-w_{0}\right\Vert _{L^{1}(B) }+\frac{\left\Vert v_{n}\right\Vert _{L^{1}(B)}}{\lambda_n}  \right) \\
&  \!\!\!\!\!\!\leq\left(  \left\Vert w_{n}-w_{0}\right\Vert _{L^{1}(B)  }+\frac{\left\Vert v_{n}\right\Vert _{L^{1}(B)}}{\lambda_n}\right)  ^{\frac{1}{2}}\rightarrow0.
\end{align*}
The bound of $\left\{  \left\Vert \nabla\widetilde{w}_{n}\right\Vert _{L^{1}%
}\right\}  $ 
follows from $\left(  \ref{2.10}\right)  .$

\noindent{\bf Step 5.} We now fix in $f$ the value of $x$ and $u$. Indeed, using hypothesis $(H2_p)$ and the fact that $\nabla \tilde{w}_n$ and $|\tilde{v}_n|^p$ have bounded $L^1$ norm, one gets
$$\begin{array}{rcl}g(x_0) & \ge & \displaystyle{\limsup_{n\rightarrow +\infty}\frac{1}{|B|}\int_{B}f(x_0+\varepsilon_n x,u(x_0)+\varepsilon_n \tilde{w}_n(x),\tilde{v}_n(x),\nabla \tilde{w}_{n}(x))\,dx}\vspace{0.2cm}\\& \ge & \displaystyle{\limsup_{n\rightarrow +\infty}\frac{1}{|B|}\int_{B}f(x_0,u(x_0),\tilde{v}_n(x),\nabla \tilde{w}_{n}(x))\,dx}.
\end{array}$$

\noindent{\bf Step 6.} At this point we are in an analogous  context to \cite{Fonseca-Kinderlehrer-Pedregal_1} and the desired inequality follows in the same way. It relies on the slicing method in order to modify $\tilde{v}_n$ and $\tilde{w}_n$ and exploit the convex-quasiconvexity of $f$, namely it is possible to find new sequences, denoted by $\bar{v}_n$ and $\bar{w}_n$ such that
$$\frac{1}{|B|}\int_{B}\bar{v}_n(z)\,dz=v(x_0)\text{ and }\bar{w}_j\in w_0+W_0^{1,\infty}(B;\mathbb{R}^n).$$ \end{proof}

\section{Relaxation in $W^{1,1}\times L^\infty$}\label{CaseLinfty}

This section is devoted to  characterize the  relaxed functional $\overline{J}_\infty$ introduced in \eqref{Jinftybar}.

Indeed we prove the following relaxation result

\begin{Theorem}\label{maintheoreminftyrelax}
Let $\Omega $ be a bounded open set of $\mathbb R^N$, and let $f:\Omega\times \mathbb{R}^n \times \mathbb{R}^m\times \mathbb{R}^{n\times N}\rightarrow [0,+\infty)$ be a continuous function. Then, assuming that $f$ and $CQf$ satisfy hypotheses $(\mathrm{H}1_\infty)$ and $(\mathrm{H}2_\infty)$
$$
\begin{array}{rcl}\overline{J}_\infty(u,v) & = & \displaystyle{\int_\O CQf(x,u(x),v(x),\nabla u(x))\,dx}, \end{array}
$$
for every $(u,v)\in W^{1,1}(\Omega;\mathbb R^n)\times L^\infty(\Omega;\mathbb R^m)$.
\end{Theorem}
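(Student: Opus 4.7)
My plan is in two steps: (i) use Lemma \ref{FMnonquasiconvex} to reduce the identification of $\overline{J}_\infty$ to the lower semicontinuity of the $CQf$-integral; (ii) prove this lower semicontinuity by adapting the argument of Theorem \ref{maintheoremp} to the $p=+\infty$ case.

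\emph{Step (i): reduction.} Lemma \ref{FMnonquasiconvex} applied with $p=+\infty$ yields $\overline{J}_\infty(u,v)=\overline{J_{CQf}}(u,v)$ for every $(u,v)\in W^{1,1}\times L^\infty$. The inequality $\overline{J_{CQf}}(u,v)\le J_{CQf}(u,v)=\int_\Omega CQf(x,u,v,\nabla u)\,dx$ is obtained from the constant recovery sequence $(u_n,v_n)\equiv(u,v)$. It therefore suffices to show that $J_{CQf}$ is already lower semicontinuous with respect to the $L^1$-strong $\times$ $L^\infty$-weak-$*$ topology, which forces the matching lower bound $\overline{J_{CQf}}(u,v)\ge J_{CQf}(u,v)$.

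\emph{Step (ii): lower semicontinuity for $CQf$.} By hypothesis $CQf$ satisfies $(H1_\infty)$ and $(H2_\infty)$, and by construction it is convex-quasiconvex. I mirror the six-step scheme of Theorem \ref{maintheoremp} with $f$ replaced by $CQf$ and the $L^p$ framework replaced by $L^\infty$. Given $(u_n,v_n)\to(u,v)$ in the prescribed sense with finite $\liminf$, I first smooth $u_n$ and $v_n$ into $C^\infty_0$ functions as in \cite{Acerbi-Fusco}, then pass to the vague limit $\mu_n:=CQf(x,u_n,v_n,\nabla u_n)\,dx\stackrel{*}{\rightharpoonup}\mu=g\,\mathcal L^N+\mu_s$, and reduce the proof to the pointwise inequality
$$g(x_0)\ge CQf(x_0,u(x_0),v(x_0),\nabla u(x_0))\qquad \mathcal L^N\text{-a.e. on }\Omega,$$
at a point $x_0$ that simultaneously satisfies \eqref{characterization-densityp}, \eqref{differentiability-point}, and \eqref{continuity-point}. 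Localization and blow-up via $w_{n,\varepsilon}(x):=(u_n(x_0+\varepsilon x)-u(x_0))/\varepsilon$, followed by a diagonal extraction, produce sequences $w_n\to w_0(x):=\nabla u(x_0)x$ in $L^1(B;\mathbb R^n)$ and $v_n\stackrel{*}{\rightharpoonup} v(x_0)$ in $L^\infty(B;\mathbb R^m)$.

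The main obstacle is the truncation, namely Step 4 of Theorem \ref{maintheoremp}. Since $\{v_n\}$ is now $L^\infty$-bounded by some $M$, the joint cut-off of $|w_n-w_0|+|v_n|/\lambda_n$ simplifies to a cut-off of $|w_n-w_0|$ alone; however the multiplicative weight $G_M(x,u)$ appearing in $(H1_\infty)$ has no analogue in the $L^p$ framework and must be controlled on the transition annulus $\{s_n<|w_n-w_0|\le t_n\}$ using $(H2_\infty)$ together with the coarea formula and Lemma 2.6 of \cite{Fonseca-Muller-quasiconvex}. The outcome is a truncated family $\widetilde w_n\to w_0$ in $L^\infty(B)$, $\widetilde v_n\stackrel{*}{\rightharpoonup} v(x_0)$ in $L^\infty(B)$, with $\|\nabla\widetilde w_n\|_{L^1(B)}$ bounded. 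I then freeze $(x,u)$ at $(x_0,u(x_0))$ via $(H2_\infty)$ (Step 5) and finally apply the slicing procedure of \cite{Fonseca-Kinderlehrer-Pedregal_1} (Step 6) to adjust the averages of $\widetilde v_n$ and the boundary values of $\widetilde w_n$, so that the convex-quasiconvexity of $CQf$ at $(v(x_0),\nabla u(x_0))$ closes the estimate and delivers the required pointwise lower bound.
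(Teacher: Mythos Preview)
Your proposal is correct, but it differs from the paper in how the two inequalities are obtained.

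\textbf{Lower bound.} Your Step~(ii) is precisely what the paper does: it says the lower bound ``can be proven as for the case $W^{1,1}\times L^p$, with a proof easier than that of Theorem~\ref{maintheoremp}, since it is not necessary [to] `truncate' the $\{v_n\}$ which are already bounded in $L^\infty$.'' Your concern about the weight $G_M(x,u)$ in $(H1_\infty)$ is slightly overstated. Since $G_M$ is bounded, the upper growth reduces to $CQf\le C'(1+|\xi|)$ and the transition-layer estimates go through exactly as in Fonseca--M\"uller; the only place where positivity of $G_M$ matters is the coercivity bound on $\|\nabla w_n\|_{L^1}$, and there one may assume $G_M(x_0,u(x_0))>0$ (otherwise $(H1_\infty)$ forces $CQf(x_0,u(x_0),v(x_0),\nabla u(x_0))=0$ and the pointwise inequality is trivial), whence continuity gives a uniform positive lower bound on the relevant compact set. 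In short, the cut-off is applied to $|w_n-w_0|$ alone and the argument is strictly simpler than Step~4 of Theorem~\ref{maintheoremp}.

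\textbf{Upper bound.} Here you take a genuinely shorter route than the paper. After invoking Lemma~\ref{FMnonquasiconvex} to replace $f$ by $CQf$, you simply test with the constant sequence $(u_n,v_n)\equiv(u,v)$, which immediately gives $\overline{J_{CQf}}(u,v)\le J_{CQf}(u,v)$ for $(u,v)\in W^{1,1}\times L^\infty$. The paper instead localizes $\overline{J}_\infty$ to a set function $\overline{F}_\infty(u,v,A)$, verifies via the De~Giorgi--Letta criterion that it is the trace of a Radon measure absolutely continuous with respect to $|Du|+\mathcal L^N$, and then proves the pointwise density bound $\frac{d\overline{F}_\infty(u,v,\cdot)}{d\mathcal L^N}(x_0)\le f(x_0,u(x_0),v(x_0),\nabla u(x_0))$ using Yosida transforms of $f$, mollification $u_n=u\ast\varrho_n$, and the Lipschitz estimate of Proposition~\ref{locallyLipschitz}; a final truncation via $\phi_n(u)$ removes the auxiliary assumption $u\in L^\infty$. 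This machinery is substantially heavier than your argument and, for the $W^{1,1}$ target stated in the theorem, unnecessary: its payoff is that it is set up to handle $u\in BV$ (cf.\ the paper's stated aim of a future $BV\times L^p$ relaxation), where the constant-sequence shortcut is unavailable because $J(u,v)=+\infty$ for $u\notin W^{1,1}$.
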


\begin{Remark}
\noindent 1) We recall that if hypotheses $(H1_\infty)$ and $(H2_\infty)$ are replaced by \eqref{H1_1inftyCQf}$-$\eqref{H2_2inftyCQf}, Propositions \ref{measCQf_infty} and \ref{FMnonquasiconvex} guarantee the validity of Theorem \ref{maintheoreminftyrelax} assuming that only $f$ satisfies \eqref{H1_1inftyCQf}$-$\eqref{H2_2inftyCQf}.

\noindent 2) We also observe that Theorem \ref{maintheoreminftyrelax} can be proven also imposing $(H1_\infty)$ and $(H2_\infty)$ only on the function $f$ but with the further requirement that $f$ satisfies \eqref{MarcelliniLipschitz}.

\noindent 3) We also stress that if $f$ satisfies $(H1_p)$ and $(H2_p)$ then clearly $\overline{J}_p(u,v) \leq \overline{J}_\infty(u,v)$ for every $(u,v) \in BV(\Omega;\mathbb R^N) \times L^\infty(\Omega;\mathbb R^m)$.
\end{Remark}
\begin{proof}[Proof of Theorem \ref{maintheoreminftyrelax}]
The thesis will be achieved by double inequality. Clearly the lower bound can be proven as for the case $W^{1,1}\times L^p$, with a proof easier than that of Theorem \ref{maintheoremp}, since it is not necessary 'truncate' the $\{v_n\}$ which are already bounded in $L^\infty$.
 For what concerns the upper  bound, we first observe that by virtue of Proposition \ref{FMnonquasiconvex}, there is no loss of generality in assuming $f$ already convex-quasiconvex.
In order to provide an upper bound for $\overline{J}_\infty$ we start by localizing our functional. The following procedure is entirely similar to \cite[Theorem 4.3]{AMT}.
We define for every open set $A \subset \Omega$ and for any $(u,v)\in BV(\Omega;\mathbb R^n)\times L^\infty(\Omega;\mathbb R^m)$

$$
\displaystyle{
\overline{F}_\infty(u,v, A):=\inf \left\{ \liminf_n F(u_n,v_n,A): u_n\to u \hbox{ in }L^1(A;\mathbb R^n), v_n \weakstar v \hbox{ in }L^\infty(A;\mathbb R^m)\right\}}$$
where
$$F(u,v,A)=\left\{\begin{array}{l}\displaystyle{\int_A f(x,u(x),v(x),\nabla u(x))\,dx} \,\text{if}\ (u,v)\in W^{1,1}(A;\mathbb{R}^n)\times L^\infty(A;\mathbb{R}^m),\vspace{0.2cm}\\ +\infty \ \text{ in }(L^1(A;\mathbb R^n)\setminus W^{1,1}(A;\mathbb R^n)) \times L^\infty(A;\mathbb R^m).\end{array}\right.$$

We start remarking that $(H1_\infty)$ implies that for every $u \in BV(\Omega;\mathbb{R}^n)$ and for every $v \in L^\infty(\Omega;\mathbb{R}^m)$ such that $\|v\|_{L^\infty} \leq M$, there exists a constant $C_M$ such that $\displaystyle{\overline{F}_\infty(u,v,A)\le C_M(|A|+|Du|(A)).}$ Moreover one has

\noindent 1) $\overline{F}_\infty$ is local, i.e. $\overline{F}_\infty(u,v,A)= \overline{F}_\infty(u',v',A)$, for every $A \subset \Omega$ open, $(u,v), (u',v') \in L^1(A;\mathbb R^n)\times L^\infty(A;\mathbb R^m)$.

\noindent 2) $\overline{F}_\infty$ is sequentially lower semi-continuous, i.e.
$\overline{F}_\infty(u,v,A)\le\liminf\overline{F}_\infty(u_n,v_n,A),$ $ \forall\ A\subset\Omega \text{ open, }\forall\ u_n\to u \text{ in } L^1(A;\mathbb{R}^n)\text{ and }v_n\weakstar v\text{ in }L^\infty(A;\mathbb{R}^m);$

\noindent 3) $\overline{F}_\infty(u,v,\cdot)$ is the trace on ${\cal A}(\Omega):=\{A\subset \Omega:\ A\text{ is open}\}$ of a Borel measure in $\mathcal{B}(\Omega)$ (the Borelians of $\Omega$).
\medskip

Condition 1) follows from the fact the adopted convergence doesn't see sets of null Lebesgue measure.
Condition 2) follows by a diagonalization argument, entirely similar to the proof of (ii) in \cite{Fonseca-Kinderlehrer-Pedregal_1}.
Condition 3) follows applying De Giorgi-Letta criterium, (cf. \cite{DeGiorgi-Letta}) and indeed proving that for any fixed $(u,v)\in BV(\Omega;\mathbb{R}^n)\times L^\infty(\Omega;\mathbb{R}^m)$,
\begin{equation}\nonumber \overline{F}_\infty(u,v,A)\le \overline{F}_\infty(u,v,C)+\overline{F}_\infty(u,v,A\setminus \overline{B}),\ \forall\ A,B,C\in {\cal A}(\Omega).
\end{equation}
We omit the details, since they are very similar to the proof of Theorem 4.3 in \cite{AMT}. The only difference consists of the fact that one has to deal with both $u's$ and $v's$ and exploit the growth condition $(H1_\infty)$.

Since $\overline{J}_\infty(u,v) = \overline{F}_\infty(u,v, \Omega)$ and $\overline{F}_\infty(u,v, \cdot)$ is the trace of a Radon measure on the open subsets of $\Omega$, (i. e. ${\cal A}(\Omega)$) absolutely continuous with respect to $|Du| + {\cal L}^N$, it will be enough to prove the following inequality
\begin{equation}\nonumber
\frac{d \overline{F}_\infty(u,v,\cdot)}{d {\cal L}^N}(x)\leq f(x,u(x),v(x),\nabla u(x)),\ \mathcal{L}^N-a.e.\ x\in\O.\end{equation}

The proof of these inequalities follows closely \cite{Fonseca-Muller-relaxation}, \cite{AMT} and \cite{BZZ}.

\noindent Assume first that $(u,v)\in (W^{1,1}(\Omega;\mathbb R^n) \cap L^\infty(\Omega;\mathbb R^n)) \times L^\infty(\Omega;\mathbb R^m)$. Fix a point $x_0 \in \Omega$ such that
\begin{equation}\label{LebPointsUB}
\frac{d \overline{F}_\infty(u,v,\cdot)}{d {\cal L}^N}(x_0)
\end{equation} exists and is finite, which is also a Lebesgue
point of $u$, $v$ and $\nabla u$ and a point
of approximate differentiability for $u$.
Clearly ${\cal L}^N$-a.e. $x_0\in \Omega$ satisfy all the above requirements.

\noindent As in \cite{Fonseca-Muller-relaxation} (see formula (5.6) therein) we may also assume that
\begin{equation}\label{FMr5.6}
\lim_{\e \to 0}\frac{1}{|Q(x_0,\e)|}\int_{Q(x_0,\e)}|u(x)-u(x_0)|(1+ |\nabla u(x)|)dx=0,
\end{equation}
\begin{equation}\label{FMr5.6bis}
\lim_{\e \to 0}\frac{1}{|Q(x_0,\e)|}\int_{Q(x_0,\e)}|v(x)-v(x_0)||\nabla u(x)|dx=0,
\end{equation}
(where we used Theorem \ref{FMrthm2.8} since $v \in L^1_{loc}(\Omega;\mathbb R^m)$ with respect to the measure $|\nabla u|\mathcal{L}^N$).  Choose a sequence of numbers $\e \in (0, {\rm dist} (x_0,  \partial \Omega))$. Then, clearly for any sequences $\{u_n\}$, $u_n \to u$ in $L^1$ , $\{v_n\}$, $v_n \weakstar v$ in $L^\infty$,
 \begin{equation}\label{FMr5.9}
 \begin{array}{lll}
\displaystyle{ \frac{\partial \overline{F}_\infty(u,v, \cdot)}{\partial {\cal L}^N}(x_0)= \lim_{\e \to 0^+}\frac{\overline{F}_\infty(u,v, B_\e(x_0))}{|B_\e(x_0)|}\leq}\\
 \\
 \displaystyle{\liminf_{\e \to 0^+} \liminf_{n \to + \infty} \frac{1}{|B_\e(x_0)|}\int_{B_\e(x_0)}f(x,u_n(x),v_n(x),\nabla u_n(x))dx.}
 \end{array}
 \end{equation}

 By virtue of Proposition 2.2 in \cite{Ambrosio-Dal Maso} we can replace  the ball $B_\e(x_0)$  in (\ref{FMr5.9}) by a cube of side length $\e$, and in fact from now on we consider such cubes.

 As in Proposition 4.6 of \cite{AMT}, (see also \cite{Fonseca-Muller-relaxation} and \cite{Fonseca-Kinderlehrer-Pedregal_2}) we consider the Yosida transforms of $f$, defined as

$$ \displaystyle{f_\lambda(x,u,v,\xi):=\sup_{(x',u')\in \Omega \times \mathbb R^n}\{f(x',u',v,\xi)-\lambda[|x-x'|+|u-u'|](1 + |\xi|+|v|) \}}$$
for every $\lambda >0$.
 Then

 \noindent $(i)$ $f_\lambda(x,u,v,\xi)\geq f(x,u,v,\xi)$ and $f_\lambda (x,u,v,\xi)$ decreases to $f(x,u,v,\xi)$ as $\lambda \to + \infty$.

 \noindent $(ii)$ $f_\lambda(x,u,v,\xi) \geq f_\eta(x,u,v,\xi)$ if $\lambda \leq \eta$ for every $(x,u,v,\xi)\in \Omega \times \mathbb R^n \times \mathbb R^m \times \mathbb R^{n \times N}$.

 \noindent$(iii)$ $|f_\lambda (x,u,v,\xi)-f_\lambda(x',u',v,\xi)| \leq \lambda (|x-x'|+|u-u'|)(1+ |\xi| + |v|)$ for every $(x,u,v,\xi), $ $(x',u',v,\xi)\in \Omega \times \mathbb R^n \times \mathbb R^m \times \mathbb R^{n \times N}$.

 \noindent $(iv)$ The approximation is uniform on compact sets. Precisely let $K$ be a compact subset of $\Omega \times \mathbb R^n$ and let $\delta>0$. There exists $\lambda>0 $ such that

$\displaystyle{f(x,u,v,\xi)\leq f_\lambda(x,u,v,\xi)\leq f(x,u,v,\xi) + \delta (1+ |v|+ |\xi|)}$
 for every $(x, u, v, \xi) \in K \times \mathbb R^m \times \mathbb R^{n \times N}$.

Let $x_0$ such that \eqref{LebPointsUB} and \eqref{FMr5.6} hold,
let $\{\varrho_n\} $ be a sequence of standard symmetric mollifiers and set
$ \left\{\begin{array}{ll}
u_n:= u \ast \varrho_n,\\
v_n:= v
\end{array}
\right.$.
It results that
$
\left\{
\begin{array}{ll}
u_n \to u &\hbox{ in }L^1(Q(x_0,\e);\mathbb R^n),\\
v_n \weakstar v &\hbox{ in }L^\infty(Q(x_0, \e);\mathbb R^m).
\end{array}
\right.
$
Fix $\delta >0$ and let $K:= \overline{B}(x_0, \frac{{\rm dist}(x_0, \partial \Omega)}{2}) \times \overline{B}(0, \| u\|_\infty)$. By $(i)\div (iv)$,
$$
\begin{array}{ll}
f(x,u_n(x), v(x), \nabla u_n(x)) \leq f_\lambda(x, u_n(x), v(x), \nabla u_n(x)) \leq\\
\\
f_\lambda(x_0, u(x_0), v_n(x), \nabla u_n(x))+ \lambda(|x-x_0|+ |u_n(x)-u(x_0)|)(1+ |v|+ |\nabla u_n(x)|)\leq\\
\\
f(x_0,u(x_0), v(x), \nabla u_n(x))+ \delta(1 + |\nabla u_n(x)|+ |v(x)|) +\lambda (|x-x_0|+ |u_n(x)-u(x_0)|)\\
\\
\times (1 + |\nabla u_n(x)|+ |v(x)|).
\end{array}
$$
Since $\nabla u_n (x)= (  \nabla u \ast\varrho_n  )(x)$,
\begin{equation}\label{upperineq1}
\begin{array}{l}
\displaystyle{\overline{F}_\infty(u,v, Q(x_0,\e))\leq \liminf_{n \to +\infty}\int_{Q(x_0,\e)}f(x,u_n(x), v(x),\nabla u_n(x))dx \leq}\\
\\
\displaystyle{\liminf_{n \to + \infty}\int_{Q(x_0,\e)}f(x_0,u(x_0), v(x), \nabla u_n(x))dx +}\\
\\
\displaystyle{\limsup_{n \to + \infty}\int_{Q(x_0,\e)} \delta(1 + |\nabla u_n(x)|+ |v(x)|) +\lambda (|x-x_0|+ |u_n(x)-u(x_0)|)}\\
\displaystyle{\;\;\;\;\;\;\;\,\,\;\;\;\;\;\;\;\;\;\;\,\;\;\;\times (1 + |\nabla u_n(x)|+ |v(x)|)dx \leq}\\
\\
\displaystyle{\liminf_{n \to +\infty}\int_{Q(x_0,\e)} f\left(x_0, u(x_0), v(x_0), \nabla u(x_0)\right)dx +}\\
\\
\displaystyle{\limsup_{n \to + \infty} \int_{Q(x_0,\e)} \beta \left(1+ \left|\nabla u(x_0) + \right|+ \left|\nabla u \ast \varrho_n\right|\right)|v(x)-v(x_0)| dx +}\\
\\
\displaystyle{\limsup_{n \to +\infty}\int_{Q(x_0,\e)} \beta | \nabla u \ast \varrho_n - \nabla u(x_0)| dx+ }\\
\\
\displaystyle{\limsup_{n \to + \infty}\int_{Q(x_0,\e)} \delta(1 + |\nabla u_n(x)|+ |v(x)|) +\lambda (|x-x_0|+ |u_n(x)-u(x_0)|)}\\
\displaystyle{\;\;\;\;\;\;\;\;\;\;\;\;\,\;\;\;
\times (1 + |\nabla u_n(x)|+ |v(x)|)dx}
\end{array}
\end{equation}
(where the constant $\beta$, depending on $\|v\|_{L^\infty}$, is the constant appearing in (\ref{MarcelliniLipschitz})).

\noindent Since $\nabla u \ast \varrho_n \to \nabla u  \in L^1_{\rm loc}(\Omega;\mathbb R^{n \times N})$  and $|D^s u|(\partial Q(x_0,\e))=0$ for each $\e >0$, we obtain
\begin{equation}\label{upperineq2}
\begin{array}{ll}
\displaystyle{\limsup_{n \to +\infty}\int_{Q(x_0,\e)} \beta | \nabla u \ast \varrho_n - \nabla u(x_0)| dx \leq \beta \int_{Q(x_0,\e)}|\nabla u(x)- \nabla u(x_0)|dx.}
\end{array}
\end{equation}

\noindent Passing to the limit on the right hand side of (\ref{upperineq1}), exploiting \eqref{upperineq2} in the third line and applying \cite[Lemma 2.5]{Fonseca-Muller-relaxation}, in the fourth line, we get
$$
\begin{array}{ll}
\displaystyle{\overline{F}_\infty(u,v, Q(x_0,\e))\leq |Q(x_0, \e)|(f(x_0,u(x_0), v(x_0), \nabla u(x_0))) +}\\
\\
\displaystyle{\beta (1 + |\nabla u(x_0)|) \int_{Q(x_0, \e)}|v(x)-v(x_0)|dx+}\\
\\
\displaystyle{\beta \limsup_{n \to + \infty}\int_{Q(x_0,\e)}|\nabla u \ast \varrho_n||v(x)-v(x_0)|dx+ \int_{Q(x_0,\e)}|\nabla u(x)- \nabla u(x_0)|dx +}\\
\\
\displaystyle{(\lambda \e +\delta)\left[(1+ C)|Q(x_0,\e)| \right]+ \lambda \limsup_{n \to + \infty}\int_{Q(x_0,\e)}|u_n- u(x_0)|(1+ C +|\nabla u_n|)dx.}
\end{array}
$$

\noindent Recalling that $x_0$ is a Lebesgue point for $v$, $\nabla u$ and \eqref{LebPointsUB} holds, we have
$$
\begin{array}{ll}
\displaystyle{\limsup_{\e \to 0^+}\frac{1}{|Q(x_0,\e)|}\beta (1 + |\nabla u(x_0)|) \int_{Q(x_0, \e)}|v(x)-v(x_0)|dx=0,}\\
\\
\displaystyle{\limsup_{\e \to 0^+}\frac{1}{|Q(x_0,\e)|}\int_{Q(x_0,\e)}|\nabla u(x)- \nabla u(x_0)|dx=0,}\\
\\
\displaystyle{\limsup_{\e \to 0^+}(\lambda \e + \delta)(1+ C)\frac{|Q(x_0,\e)|}{|Q(x_0,\e)|}=\delta (1+C)}.
\end{array}
$$

\noindent Moreover by virtue of (\ref{FMr5.6}) and arguing as in the estimate of formula (5.11) of \cite{Fonseca-Muller-relaxation} we can conclude that
$$
\displaystyle{\limsup_{\e \to 0^+}\frac{\lambda}{|Q(x_0,\e)|}\limsup_{n \to +\infty}\int_{Q(x_0,\e)}|u_n- u(x_0)|(1+ C +|\nabla u_n|)dx=0.}
$$

\noindent Then we can  exploit  (\ref{FMr5.6bis}) and argue again as done for (5.11) in \cite{Fonseca-Muller-relaxation} in order to evaluate
$$
\displaystyle{\limsup_{\e \to 0^+}\frac{\beta}{|Q(x_0,\e)|} \limsup_{n \to + \infty} \int_{Q(x_0,\e)} | \nabla u \ast \varrho_n||v(x)-v(x_0)|dx.}
$$
\noindent We will apply \cite[Lemma 2.5]{Fonseca-Muller-relaxation} and the dominated convergence theorem with respect to the measure $|\nabla u|dx$, obtaining
$$
\begin{array}{ll}
\displaystyle{\limsup_{n \to +\infty}\int_{Q(x_0,\e)}|v(x)-v(x_0)||\nabla u_n(x)|dx \leq }
\\
\\
\displaystyle{\limsup_{n \to + \infty} \int_{Q\left(x_0, \e +\frac{1}{n}\right)} (|v-v(x_0)|\ast \varrho_n)|\nabla u (x)|dx\leq}\\
\\
\displaystyle{\int_{\overline{Q(x_0, \e)}} |v(x)-v(x_0)|
|\nabla u(x)|dx .}
\end{array}
$$
Taking into account that $|Du|(\partial Q(x_0,\e))=0$ for a.e. $\e$ 
one obtains from (\ref{FMr5.6}) that
$$
\displaystyle{\limsup_{\e \to 0^+}\limsup_{n \to +\infty}\frac{1}{|Q(x_0,\e)|}\int_{Q(x_0,\e)}|v(x)-v(x_0)||\nabla u_n(x)|dx=0.}
$$
Consequently,
$$
\begin{array}{ll}
\displaystyle{g(x_0)=\frac{\partial{\overline F}_\infty(u,v)(x_0)}{\partial{\cal L}^N}\leq f(x_0,u(x_0), v(x_0), \nabla u(x_0)) + (1+C)\delta}
\end{array}
$$
\nonumber Finally, we send  $\delta$ to $0$ and that concludes the proof, when $(u,v) \in (W^{1,1}(\Omega;\mathbb R^n)\cap L^\infty(\Omega;\mathbb R^n))\times L^\infty(\Omega;\mathbb R^m)$.

To conclude the proof, we can argue as in \cite[Theorem 2.16, Step 4]{Fonseca-Muller-relaxation}, in turn inspired by \cite{AMT}, introducing the following approximation.

\noindent Let  $\phi_n \in C^1_0(\mathbb R^n;\mathbb R^n)$ be such that $\phi_n(y)=y \hbox{ if } y \in B_n(0), \;\; \|\nabla \phi_n\|_{L^\infty}\leq 1.$
By \cite[Theorem 3.96]{AFP} $\phi_n(u)\in W^{1,1}(\Omega;\mathbb R^n)\cap L^\infty(\Omega;\mathbb R^n) $ for every $n \in \mathbb N$.
\noindent Since $\phi_n(u) \to u$ in $L^1$, by the lower semicontinuity of $\overline{J}_\infty$ we get
$$
\displaystyle{\overline{J}_\infty(u,v)\leq \liminf_{n \to + \infty}\int_\Omega f(x, \phi_n(u),v, \nabla \phi_n(u))\,dx.}
$$
Arguing in analogy with \cite[Theorem 4.9]{AMT} one can prove that
$$
\displaystyle{\limsup_{n \to +\infty} \int_\Omega f(x, \phi_n(u), v, \nabla \phi_n(u))\,dx \leq \int_\Omega f(x, u, v,\nabla u)\,dx,}
$$
and this concludes the proof.
\end{proof}

\section{Relaxation in $W^{1,1}\times L^p$}
This section is devoted to the proof of the following theorem.
It relies on Theorem \ref{maintheoreminftyrelax} and on some approximation results (see \cite{AMT}).
\begin{Theorem}\label{maintheoremprelax}
Let $\Omega $ be a bounded open set of $\mathbb R^N$, and let $f:\Omega\times \mathbb{R}^n \times \mathbb{R}^m\times \mathbb{R}^{n\times N}\rightarrow [0,+\infty)$ be a continuous function. Then, assuming that $f$ satisfies hypotheses $(\mathrm{H}1_p)$ and $(\mathrm{H}2_p)$
$$
\begin{array}{rcl}\overline{J}_p(u,v) & = & \displaystyle{\int_\O CQf(x,u(x),v(x),\nabla u(x))\,dx}, \end{array}
$$
for every $(u,v)\in W^{1,1}(\Omega;\mathbb R^n)\times L^p(\Omega;\mathbb R^m)$.
\end{Theorem}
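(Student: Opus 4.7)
The plan is to obtain Theorem \ref{maintheoremprelax} as a fairly short corollary of the two preliminary results already in hand: Lemma \ref{FMnonquasiconvex}, which replaces $f$ by $CQf$ inside the relaxed functional, and Theorem \ref{maintheoremp}, which provides lower semicontinuity once the integrand is convex-quasiconvex. No new blow-up, truncation, or diagonalization is required; everything is recycled from the previous sections.

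First, since $f$ satisfies $(H1_p)$ and $(H2_p)$, Lemma \ref{FMnonquasiconvex} together with Remark \ref{growthremark} (which explicitly covers the case $p\in(1,+\infty)$, requiring hypotheses only on $f$) gives
$$\overline{J}_p(u,v)=\overline{J_{CQf}}(u,v)$$
for every $(u,v)\in W^{1,1}(\Omega;\mathbb R^n)\times L^p(\Omega;\mathbb R^m)$. The problem therefore reduces to identifying $\overline{J_{CQf}}$ on this space with $\int_\Omega CQf(x,u,v,\nabla u)\,dx$.

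Next, I would apply Theorem \ref{maintheoremp} with $CQf$ playing the role of $f$. The necessary hypotheses are all available: $CQf$ is convex-quasiconvex by its very definition \eqref{CQ}, and by Proposition \ref{measCQf_p} it is continuous on $\Omega\times\mathbb R^n\times\mathbb R^m\times\mathbb R^{n\times N}$ and inherits $(H1_p)$ and $(H2_p)$ from $f$. Consequently Theorem \ref{maintheoremp} yields that the functional
$$(u,v)\longmapsto \int_\Omega CQf(x,u(x),v(x),\nabla u(x))\,dx$$
is sequentially lower semicontinuous on $W^{1,1}(\Omega;\mathbb R^n)\times L^p(\Omega;\mathbb R^m)$ with respect to the $L^1$-strong $\times$ $L^p$-weak topology, i.e. $J_{CQf}$ is already l.s.c.\ on its finiteness domain.

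Finally, the conclusion is the standard identification of a lower semicontinuous functional with its relaxation. On the one hand, lower semicontinuity of $J_{CQf}$ implies $\overline{J_{CQf}}(u,v)\ge J_{CQf}(u,v)=\int_\Omega CQf(x,u,v,\nabla u)\,dx$ by taking the $\liminf$ along any admissible sequence; on the other, the constant sequence $(u_n,v_n)\equiv(u,v)$ is admissible in the infimum defining $\overline{J_{CQf}}(u,v)$ and gives the reverse inequality. Combining with the first paragraph, $\overline{J}_p(u,v)=\int_\Omega CQf(x,u,v,\nabla u)\,dx$, as claimed. The only serious difficulty in this chain is not in the present statement but in the auxiliary results that feed it, most notably the l.s.c.\ Theorem \ref{maintheoremp} (blow-up plus the delicate truncation step that simultaneously controls the $W^{1,1}$ and $L^p$ sequences) and the $L^p$-diagonalization in Lemma \ref{FMnonquasiconvex}; granted those, the verification above is essentially bookkeeping.
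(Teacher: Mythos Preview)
Your argument is correct. Both the lower bound and the reduction step via Lemma \ref{FMnonquasiconvex} and Proposition \ref{measCQf_p} match the paper exactly; the difference lies in how the upper bound is obtained after one has reduced to a convex-quasiconvex integrand.

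The paper, having replaced $f$ by $CQf$, does \emph{not} simply test the constant sequence. Instead it observes that $(H1_p)$--$(H2_p)$ imply the strong form \eqref{H1_1inftyCQf}--\eqref{H2_2inftyCQf} of the $L^\infty$ hypotheses, invokes the full relaxation Theorem \ref{maintheoreminftyrelax} to identify $\overline{J}_\infty$ on $W^{1,1}\times L^\infty$, and then passes from $L^\infty$ to $L^p$ by truncating $v$ via $v_\lambda=\tau_\lambda(|v|)v$ and using lower semicontinuity of $\overline{J}_p$ together with dominated convergence. Your route is shorter: once $\overline{J}_p=\overline{J_{CQf}}$ and $J_{CQf}$ is lower semicontinuous by Theorem \ref{maintheoremp}, the constant sequence $(u_n,v_n)\equiv(u,v)$ immediately yields the upper bound for any $(u,v)\in W^{1,1}\times L^p$, with no detour through the $L^\infty$ theory and no truncation in $v$. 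The paper's path has the virtue of tying the $L^p$ result to the independently interesting $L^\infty$ representation, and its upper-bound machinery (Yosida transforms, mollification, measure-theoretic localization) is what one would need in a $BV$ setting; but for the statement as written, restricted to targets in $W^{1,1}$, your bookkeeping argument is entirely sufficient and more economical.
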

 \begin{proof}[Proof]
The lower bound follows from Theorem \ref{maintheoremp}.
For what concerns the upper bound, without loss of generality, by virtue of Lemma \ref{FMnonquasiconvex} and Propostion \ref{measCQf_p} we may assume that $f$ is convex-quasiconvex.

Observe first that since $f$ fulfills $(H1_p)$ and $(H2_p)$, then it satisfies  $(H1_\infty)$ and $(H2_\infty)$ in the strong form \eqref{H1_1inftyCQf} $-$ \eqref{H2_2inftyCQf}.
Consequently
\begin{equation}\label{ineq}
\overline{J}_p(u,v,\Omega)\leq \overline{J}_\infty(u,v, \Omega)
\end{equation}
 for every $(u, v) \in BV( \Omega;\mathbb R^n) \times L^\infty(\Omega;\mathbb R^m).$

For every positive real number $\lambda$, let $\tau_\lambda: [0,+\infty) \to [0, +\infty)$ be defined as
$$
\tau_\lambda(t)=\left\{
\begin{array}{ll}
t &\hbox{ if } 0 \leq t \leq \lambda,\\
0 &\hbox{ if } t \geq \lambda.
\end{array}
\right.
$$
For every $v \in L^p(\Omega;\mathbb R^m)$, define $v_\lambda:= \tau_\lambda(|v|)v$. Clearly $\int_\Omega |v_\lambda|^p dx \leq \int_\Omega |v|^p dx$ and $v_\lambda \to v$ in $L^p(\Omega;\mathbb R^m)$, as $\lambda \to +\infty$. By the lower semicontinuity of $\overline{J}_p$, \eqref{ineq}, and Theorem \ref{maintheoreminftyrelax}, for every sequence $\{\lambda\}$ such that $\lambda \to +\infty$ we have that
$$
\begin{array}{ll}
\displaystyle{\overline{J}_p(u,v)\leq \liminf_{\lambda \to \infty} \overline{J}_p(u,v_\lambda)=}\displaystyle{\liminf_{\lambda \to +\infty} \int_\Omega f(x,u(x), v_\lambda(x), \nabla u(x))dx}.
\end{array}
$$
Lebesgue's dominated convergence Theorem entails that
$$
\displaystyle{\overline{J}_p(u,v,\Omega)= \int_\Omega f(x,u(x),v(x),\nabla u(x))dx,}
$$
for every $(u,v) \in W^{1,1}(\Omega;\mathbb R^n)\times L^p(\Omega;\mathbb R^m),$
and that concludes the proof.
\end{proof}

\subsection*{Acknowledgments}
The work of A.M. Ribeiro was partially supported by the  Funda\c{c}\~{a}o para a Ci\^{e}ncia e a Tecnologia (Portuguese Foundation for Science and Technology) through PEst-OE/MAT/UI0297/2011 (CMA), UTA-CMU/MAT/0005/2009 and \\ 
PTDC/MAT109973/2009.

\noindent The research of E. Zappale was partially supported by GNAMPA through Project `Problemi variazionali e misure di Young nella meccanica dei materiali complessi' and Universit\'a del Sannio.
The authors thank Irene Fonseca for suggesting the problem.




\end{document}